\tikzset{Bullet/.style={fill=black,draw,color=#1,circle,minimum size=3pt,scale=0.75}}
\newcommand*{\defn}[1]{{\color{blue}\emph{#1}}}
\newtheorem{theorem}{Theorem}[section]
\newtheorem{corollary}{Corollary}[theorem]
\newtheorem{lemma}[theorem]{Lemma}
\newtheorem{proposition}[theorem]{Proposition}
\theoremstyle{definition}
\theoremstyle{remark}
\newenvironment{example}
    {\pushQED{\qed}\examplex}
    {\popQED\endexamplex}
\DeclareMathOperator{\colspace}{ColSpace}
\DeclareMathOperator{\rank}{rank}
\DeclareMathOperator{\link}{Lk}
\DeclareMathOperator{\Star}{Star}
\title{Minimal Face Numbers for Volume Rigidity}
\author{Jack Southgate}
\date{\today}
\begin{document}

\maketitle

\begin{abstract}
Maxwell introduced a necessary minimum number of edges in terms of the number of vertices required for a graph to yield a Euclidean rigid generic framework in $\mathbb{R}^3$, this count was generalised to $\mathbb{R}^d$, for all $d\geq 1$. In this paper, we give the analogous minimum number of $k$-simplices, for all $0\leq k\leq d$, required for a pure $d$-dimensional simplicial complex to yield a volume rigid generic framework in $\mathbb{R}^d$, for all $d\geq 1$. In order to do so, we prove some basic facts about the volume rigidity matroid and use exterior algebraic shifting, a recently added tool to the study of volume rigidity. We later prove a volume rigidity Vertex Removal Lemma and use our count to strengthen the statement.
\end{abstract}

\section{Introduction}

Given a $d$-dimensional simplicial complex $\Sigma$, we may embed $\Sigma$ in $\mathbb{R}^d$ by listing out the positions of its vertices.
Given such an embedding, each $d$-simplex encloses a (signed) $d$-dimensional volume, which we may measure.
We say that $\Sigma$ is $d$-volume rigid if, given a \textit{typical} embedding, any continuous deformation of its vertices that preserves the $d$-volume of its $d$-simplices preserves every $d$-volume enclosed by a $(d+1)$-tuple of vertices.
Notice that there is a family of continuous deformations taking the whole simplex to its image under a $d$-volume preserving affine transformation of $\mathbb{R}^d$, we call such deformations the trivial motions or flexes of $\Sigma$.

This setup is inspired by Euclidean graph rigidity, where we embed a graph $G=(V,E)$ in $\mathbb{R}^d$ and call it Euclidean $d$-rigid if every continuous deformation of its vertices that preserves the lengths of its edges preserves the distances between all the pairs of vertices.

In Euclidean graph rigidity (as well as graph rigidity over other normed spaces), \textit{Maxwell counts} (introduced in \citet{maxwell1864calculation}) determine the minimal numbers of edges required for a graph to be $d$-rigid.
In particular, if the graph $G=(V,E)$ is Euclidean $d$-rigid, then $G$ admits a spanning subgraph $G'=(V,E')$ so that $i_{G'}(X)\leq d|X|-\binom{d+1}{2}$, for all $X\subseteq V$ with $|X|\geq d$, where $i_{G'}(X)$ denotes the number of edges in the subgraph of $G'$ induced by $X$, and $|E'|=d|X|-\binom{d+1}{2}$.

Also arising from the study of Euclidean graph rigidity is the $d$-dimensional Euclidean rigidity matroid $\mathcal{E}_n^d$, the ground set of which is the edge set of the complete graph on $n$ vertices, $K_n^1$, and the bases of which are the edge sets of subgraphs of $K_n^1$ that are \textit{minimally} Euclidean rigid in $\mathbb{R}^d$.

In going from Euclidean bar-joint rigidity to $d$-volume rigidity of pure $d$-dimensional simplicial complexes, we notice that we do not just have vertices and edges (0-  and 1-simplices) to consider, but $k$-simplices for all $0\leq k\leq d$.
It often makes sense to consider just the 0- and $d$-simplices, as we care about deformations of the 0-simplices and volumes enclosed by the $d$-simplices, and the matroid arising from the volume measurement map is on a ground set of $d$-simplices.
\citet{borcea2013realizations} deduced that if a $d$-dimensional simplicial complex on $n$ vertices is $d$-volume rigid in $\mathbb{R}^d$, then it has at least $dn-(d^2+d-1)$ maximal simplices.

In this paper we will deduce similar necessary lower bounds in terms of $n$ and $d$ for the number of $k$-simplices, for all $0\leq k\leq d$.

In Section \ref{sec:gdvr}, we formally define $d$-volume rigidity.
We then introduce the $d$-volume rigidity matroid, with the intention of finding necessary combinatorial conditions for a simplicial complex to be a basis in the matroid (these being the rigid $d$-dimensional complexes on $n$ vertices with $dn-(d^2+d-1)$ $d$-simplices).
We end by giving a combinatorial characterisation of rigidity when $d\in\{1,2\}$.

By combinatorial characterisations of the rigidity, we refer to characterisation in terms of properties of the simplicial complex that can be read off reasonably directly.
We will see in Section \ref{sec:gdvr} that the ranks of symbolic matrices also characterise the matroid, as do the ranks of the corresponding matrices with random entries (with high probability).

In Section \ref{sec:eas}, we consider exterior algebraic shifting as a tool to study $d$-volume rigidity.
\citet{bulavka2022volume} have already characterised $d$-volume rigidity in terms of the exterior algebraically shifted complexes of a simplicial complex.
We exploit their result, as well as the fact that exterior algebraic shifting preserves some helpful combinatorial properties of simplicial complexes that we noted in Section \ref{sec:gdvr}, to deduce our main result: a sharp lower bound on the face numbers of minimally rigid simplicial complexes.

Finally, in Section \ref{sec:vrl}, we take a short look at generic global volume rigidity, translating a result from Euclidean graph rigidity to volume rigidity.

Before we proceed, we will outline a few definitions and notational points:

A simplicial complex $\Sigma$ on $n$ vertices is a collection of subsets of $[n]:=\{1,\dots,n\}$ so that if $A\subseteq [n]$ is in $\Sigma$, then so are all of its subsets (including the empty set $\varnothing$).
Since elements of $\Sigma$ are collections of distinct natural numbers, we order them as tuples (noting that this does induce an orientation).
Call a $(k+1)$-tuple in $\Sigma$ a $k$-simplex and denote the set of $k$-simplices in $\Sigma$ by $\Sigma_k$.
Define the dimension of $\Sigma$ to be the largest $d$ so that $\Sigma$ contains a $d$-simplex.
If every $k$-simplex is contained in some $d$-simplex, where $d$ is the dimension of $\Sigma$, then $\Sigma$ is pure and $d$-dimensional.
If $\Sigma$ is $d$-dimensional, the \defn{$f$-vector} of $\Sigma$ is the vector $f(\Sigma)\in\mathbb{Z}^{d+2}$, with entries indexed by $\{-1,0,1,\dots,d\}$, so that
\begin{equation*}
    f(\Sigma)_k=\begin{cases}
        1\text{, }&\text{if }k=-1,\\
        |\Sigma_k|\text{, }&\text{if }k\geq 0,
    \end{cases}
\end{equation*}
ie. $f(\Sigma)$ lists out the number of $k$-dimensional faces of $\Sigma$, with $(-1)^{\text{th}}$ entry equal to 1 (by convention).
The \defn{Betti numbers} of $\Sigma$ are roughly the number of $d$-dimensional holes in $\Sigma$.
They are usually defined in terms of the homology of $\Sigma$, where the $k^{\text{th}}$ Betti number, $\beta(\Sigma)_k$, of $\Sigma$ is the dimension of the $k^{\text{th}}$ homology group of $\Sigma$. A more in depth introduction may be found in \citet{hatcher2002algebraic}.

Given a $k$-simplex $i_1\dots i_{k+1}$ in the $d$-dimensional complex $\Sigma$, the \defn{star} of $i_1\dots i_{k+1}$, denoted $\Star_{\Sigma}(i_1\dots i_{k+1})$ is the simplicial complex defined by the maximal simplices containing $i_1\dots i_{k+1}$ as a sub-simplex.
The \defn{link} of $i_1\dots i_{k+1}$, denoted $\link_{\Sigma}(i_1\dots i_{k+1})$, is the collection of $(d-k-1)$-simplices $j_1\dots j_{d-k}$ so that the simplex obtained by concatenation, $i_1\dots i_{k+1} j_1\dots j_{d-k}$ (after a potential re-ordering), is a maximal simplex in $\Star(i_1\dots i_{k+1})$.

Next, suppose that a vector space $\mathbf{k}^n$ has basis $\{e_{i_1},\dots,e_{i_n}\}$, where $I=\{i_1,\dots, i_n\}$, is a finite indexing set, we may write $\mathbf{k}^n$ as $\mathbf{k}^I$.
Let $J\subseteq I$ be a subset of indices, the \defn{orthogonal projection} of $\mathbf{k}^I$ onto $\mathbf{k}^J$ is the linear map $\pi_J:\mathbf{k}^I\rightarrow\mathbf{k}^J$ with kernel $\mathrm{Span}_{\mathbf{k}}\{e_i:i\in I\setminus J\}$.

Similarly, given a matrix $A\in\mathbf{k}^{I\times J}$, if $I'\subseteq I$ and $J'\subseteq J$, then $A_I$ is the restriction of $A$ to the rows indexed by $I'$ and $A^J$ is the restriction of $A$ to the columns indexed by $J'$.

\subsection{Acknowledgements}

Thank you to Louis Theran, under whose supervision this paper was written, particularly for help with the algebraic geometry in Section \ref{sec:gdvr} and the Vertex Removal Lemma in Section \ref{sec:vrl}.
Thank you to Daniel Bernstein, William Sims and others involved in studying the combinatorial characterisation of $\mathcal{M}(\mathrm{Gr}(2,N))$.
Thank you as well to Peiran Wu for helpful discussions.

\section{Generic $d$-Volume Rigidity}\label{sec:gdvr}

Let $\Sigma$ be a $d$-dimensional simplicial complex on $n$ vertices labelled $\{1,\dots,n\}$.

A \defn{framework} of $\Sigma$ in $\mathbb{R}^d$ is a pair $(\Sigma,p)$, where $p=(p(1),\dots,p(n))\in(\mathbb{R}^d)^n$ is a \defn{configuration} vector.
We may measure the (signed) $d$-volumes of different frameworks of $\Sigma$ via the polynomial map
\begin{equation*}
    \alpha_{\Sigma}:(\mathbb{R}^d)^n\rightarrow\mathbb{R}^{f(\Sigma)_d};p\mapsto\left(\begin{vmatrix}1&\dots&1\\p(i_1)&\dots&p(i_{d+1})\end{vmatrix}:i_1\dots i_{d+1}\in\Sigma_d\right)
\end{equation*}
We do not outright require that $\Sigma$ be pure, however we will only measure pure complexes going forward as all maximal simplices of dimension less than $d$ will not be measured by the $\alpha_{\Sigma}$, and will therefore not make any meaningful contribution.

We say two frameworks $(\Sigma,p)$ and $(\Sigma,q)$ are \defn{($d$-volume) equivalent} if
\begin{equation*}
    \alpha_{\Sigma}(p)=\alpha_{\Sigma}(q).
\end{equation*}
Denote by $K_n^d$ the complete $d$-dimensional simplicial complex on $n$ vertices, and write $\alpha_n^d:=\alpha_{K_n^d}$. Then $(\Sigma,p)$ and $(\Sigma,q)$ are \defn{($d$-volume) congruent} if
\begin{equation*}
    \alpha_n^d(p)=\alpha_n^d(q).
\end{equation*}

Now, a framework $(\Sigma,p)$ is \defn{(locally) ($d$-volume) rigid} in $\mathbb{R}^d$ if there exists an open neighbourhood $U$ of $p$ in $(\mathbb{R}^d)^n$ so that, for all $q\in U$, if $(\Sigma,p)$ and $(\Sigma,q)$ are $d$-volume equivalent, then they are $d$-volume congruent.

A framework $(\Sigma,p)$ is \defn{globally ($d$-volume) rigid} in $\mathbb{R}^d$ if, for all $q\in(\mathbb{R}^d)^n$, if $(\Sigma,p)$ and $(\Sigma,q)$ are $d$-volume equivalent, then they are $d$-volume congruent.

A point $p\in X\subseteq\mathbb{R}^D$ is \defn{generic} if, for all $f\in\mathbb{Q}[x_1,\dots,x_{dn}]$, if $f(X)\neq 0$, then $f(p)\neq 0$.
A property $\mathcal{P}:X\rightarrow\{\text{True}\text{, }\text{False}\}$ is \defn{generic} if it is constant over all generic points in $X$.

\begin{proposition}(\cite{southgate2023bounds})
\begin{itemize}
    \item Local $d$-volume rigidity is a generic property of $d$-dimensional simplicial complexes
    \item Global $d$-volume rigidity is not a generic property of $d$-dimensional simplicial complexes.
\end{itemize}
\end{proposition}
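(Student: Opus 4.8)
The plan is to establish the two bullet points separately, in both cases reducing the rigidity question to the rank of a Jacobian (rigidity) matrix and exploiting the structure of the volume measurement map $\alpha_\Sigma$.

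\textbf{Local rigidity is generic.} First I would argue that $(\Sigma,p)$ is locally $d$-volume rigid if and only if the rank of the Jacobian $d\alpha_\Sigma|_p$ equals the rank of $d\alpha_n^d|_p$. The forward direction is the standard rigidity-theoretic observation that $\alpha_\Sigma$ restricted to a neighbourhood of $p$ has the same fibre germ as $\alpha_n^d$ exactly when $\Sigma$ is rigid; the rank equality follows because both maps are polynomial, so near a generic (hence smooth) point their fibres are smooth manifolds of dimension $dn - \rank$, and one fibre germ sits inside the other. For the converse, the constant-rank theorem gives that when the ranks agree the two fibre germs coincide as smooth submanifolds (one contains the other and they have equal dimension), giving the congruence conclusion. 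Having reduced to a rank condition, the result follows from a general fact: the rank of the Jacobian of a fixed $\mathbb{Q}$-polynomial map is maximised on the generic locus, and this maximum is attained at every generic point. Concretely, $\rank d\alpha_\Sigma|_p \ge r$ is the non-vanishing of some $r\times r$ minor, a polynomial with rational coefficients in the coordinates of $p$; so if it is nonzero somewhere it is nonzero at every generic point. Applying this to both $\alpha_\Sigma$ and $\alpha_n^d$ shows the pair of ranks, and hence their equality, is constant on the generic locus. I would cite \cite{southgate2023bounds} for the precise statement of the rank characterisation if it is proved there, otherwise include the short argument above.

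\textbf{Global rigidity is not generic.} Here the plan is to exhibit, for some $d$ and $n$, a single $d$-dimensional complex $\Sigma$ with two generic configurations $p,q$ such that $(\Sigma,p)$ is globally rigid while $(\Sigma,q)$ is not (or simply two generic frameworks of the same complex, one globally rigid and one not). The cleanest route is the $d=1$ case, where $\alpha_\Sigma$ measures signed differences $p(j)-p(i)$ along edges, i.e. $1$-volume rigidity of a graph is an affine (unsigned-length-free) analogue; a path or a sufficiently sparse but rigid graph will have generic frameworks that admit a "reflection-type" second solution to the volume equations not related by a volume-preserving affine map, because the sign data of the determinants can be realised by configurations in different chambers. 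Alternatively one can quote the known phenomenon from \cite{borcea2013realizations} or the Euclidean literature that global rigidity depends on more than the generic stratum. The key point to nail down is that the ambiguous partner configuration $q$ can itself be taken generic — one shows the "bad" set (configurations equivalent but not congruent to some other configuration) contains an open subset, hence meets the generic locus, while its complement also does.

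\textbf{Main obstacle.} I expect the genuine work to be in the converse direction of the local-rigidity rank characterisation — upgrading "equal Jacobian ranks at a generic point" to "the fibre germ of $\alpha_\Sigma$ is contained in that of $\alpha_n^d$," which requires knowing that the generic fibre of $\alpha_n^d$ is smooth and that the trivial-flex orbit accounts for exactly its local dimension (this is where the group of $d$-volume-preserving affine maps, of dimension $d^2+d-1$, enters, matching the $\rank d\alpha_n^d = dn-(d^2+d-1)$ count quoted from \cite{borcea2013realizations}). For the non-genericity bullet, the obstacle is purely in producing an honest example and verifying both a global-rigidity and a global-flexibility instance occur at generic configurations; the rank machinery does not help here since global rigidity is not detected by first-order data.
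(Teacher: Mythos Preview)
The paper does not actually prove this proposition; it imports it from \cite{southgate2023bounds} and only remarks, immediately after the statement, that the pentagonal bipyramid in $\mathbb{R}^2$ is the relevant counterexample for the second bullet. So there is no detailed argument to compare your first bullet against, and your reduction of local rigidity to a Jacobian-rank condition together with the ``rank is lower-semicontinuous and maximised on the generic locus'' argument is the standard and correct way to fill in that citation.

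Your second bullet, however, contains a genuine gap. In $d=1$ the measurement map $\alpha_\Sigma$ is \emph{linear}: it records the signed differences $p(j)-p(i)$ along edges. If $\Sigma$ is connected (equivalently, locally $1$-volume rigid), then any spanning tree of $\Sigma$ already determines every difference $p(k)-p(l)$ by summing along the tree path, so two equivalent frameworks are automatically congruent. Thus in $d=1$ local rigidity implies global rigidity for \emph{every} configuration, and there is no ``reflection-type'' ambiguity to exploit---the volume-preserving affine group in $\mathbb{R}^1$ is just the translation group, and signed differences leave no sign freedom. Your proposed path example therefore fails outright.

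The non-genericity phenomenon first appears at $d=2$, and the paper points you to the concrete example: the pentagonal bipyramid. What you need to exhibit (and what is done in \cite{southgate2023bounds}) is a $2$-dimensional complex $\Theta$ together with two generic configurations $p,q$ such that $(\Theta,p)$ is globally $2$-volume rigid while $(\Theta,q)$ is only locally rigid. Your final paragraph correctly identifies that the rank machinery is useless here and that one must produce and verify an explicit example; the issue is only that you reached for the wrong dimension.
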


Ie. If $p\in(\mathbb{R}^d)^n$ is generic and $(\Sigma,p)$ is locally $d$-volume rigid, then $(\Sigma,q)$ is locally $d$-volume rigid, for all $q\in(\mathbb{R}^d)^n$ generic.
However, there exist $d$-dimensional simplicial complexes $\Theta$ and generic frameworks $(\Theta,p)$ and $(\Theta,q)$ so that $(\Theta,p)$ is globally $d$-volume rigid, but $(\Theta,q)$ is locally, but not globally, $d$-volume rigid (abbreviated to \defn{LNGR}).
Take, for example, the pentagonal bipyramid in $\mathbb{R}^2$.

If all generic frameworks of a simplicial complex $\Sigma$ are globally $d$-volume rigid, then we call $\Sigma$ \defn{generically globally ($d$-volume) rigid} in $\mathbb{R}^d$.
The most obvious family of such simplicial compelexes is the family of complete $d$-dimensional simplicial complexes.

The differential of the complete $d$-volume measurement map $\alpha_n^d$ is the matrix of indeterminates $\mathrm{d}\alpha_n^d$ which, when evaluated at a configuration $p\in(\mathbb{R}^d)^n$, yields the \defn{complete rigidity matrix} $R(p)$.

\begin{lemma}\label{lem:rankofrigmatroid}(\cite{southgate2023bounds})
Let $\Sigma$ be a pure $d$-dimensional simplicial complex on $n$ vertices and let $p\in(\mathbb{R}^d)^n$ be a configuration in $\mathbb{R}^d$. If $n\geq d+1$ and the restriction of the rigidity matrix $R(p)$ to the rows indexed by the $d$-simplices of $\Sigma$, $R(p)_{\Sigma_d}$, has rank $dn-(d^2+d-1)$, then $(\Sigma,p)$ is $d$-volume rigid in $\mathbb{R}^d$.
If $p$ is generic then the reverse direction holds: if $(\Sigma,p)$ is $d$-volume rigid in $\mathbb{R}^d$, then $\rank(R(p)_{\Sigma_d})=dn-(d^2+d-1)$.
\end{lemma}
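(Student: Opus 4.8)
The plan is to run the classical Asimow--Roth ``rigidity equals maximal differential rank'' argument in the volume setting. Write $G$ for the group of $d$-volume-preserving affine transformations of $\mathbb{R}^d$, that is, the maps $x\mapsto Ax+b$ with $\det A=\pm 1$ and $b\in\mathbb{R}^d$; as a manifold $G$ has dimension $(d^2-1)+d=d^2+d-1$. Two preliminary observations: first, $\alpha_\Sigma=\pi_{\Sigma_d}\circ\alpha_n^d$, so $\mathrm{d}\alpha_\Sigma(p)=R(p)_{\Sigma_d}$; second, by the definitions of equivalence and congruence, $(\Sigma,q)$ is congruent to $(\Sigma,p)$ exactly when $q\in(\alpha_n^d)^{-1}(\alpha_n^d(p))$, and since $\alpha_\Sigma$ factors through $\alpha_n^d$ we always have $(\alpha_n^d)^{-1}(\alpha_n^d(p))\subseteq\alpha_\Sigma^{-1}(\alpha_\Sigma(p))$. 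Hence $(\Sigma,p)$ is $d$-volume rigid if and only if these two fibres coincide on some neighbourhood of $p$.

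The next step is to pin down the fibres of the complete map. For any $p$ whose vertices affinely span $\mathbb{R}^d$ -- in particular for any generic $p$, since $n\geq d+1$ -- I claim $(\alpha_n^d)^{-1}(\alpha_n^d(p))=G\cdot p$: given $q$ with $\alpha_n^d(q)=\alpha_n^d(p)$, choose an affinely independent $(d+1)$-subset of the vertices and apply the element of $G$ carrying $q$ to agreement with $p$ there (it exists and lies in $G$ because the two corresponding simplex volumes agree and are nonzero); the remaining coordinates of $q$ then satisfy, vertex by vertex, a determined system of equations that is linear in $q(j)$ and solved by $p(j)$, so $q=p$. The $G$-stabiliser of such a $p$ is trivial (an affine map fixing $d+1$ affinely independent points is the identity), so this fibre is a smooth manifold of dimension $d^2+d-1$. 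Consequently the generic rank of $\alpha_n^d$, which is the maximal rank it attains, equals $dn-(d^2+d-1)$, and therefore
\[
\rank R(q)_{\Sigma_d}\ \leq\ \rank\,\mathrm{d}\alpha_n^d(q)\ \leq\ dn-(d^2+d-1)
\]
for every configuration $q$.

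Now the two implications. If $\rank R(p)_{\Sigma_d}=dn-(d^2+d-1)$, then by the display this value is the global maximum of both $\rank R(\cdot)_{\Sigma_d}$ and $\rank\,\mathrm{d}\alpha_n^d(\cdot)$, so by lower semicontinuity of rank both maps have locally constant rank $dn-(d^2+d-1)$ near $p$; the constant rank theorem then exhibits $\alpha_\Sigma^{-1}(\alpha_\Sigma(p))$ and $(\alpha_n^d)^{-1}(\alpha_n^d(p))$, near $p$, as smooth submanifolds of the common dimension $d^2+d-1$, one contained in the other, hence equal near $p$, so $(\Sigma,p)$ is rigid. For the converse, let $p$ be generic with $\rank R(p)_{\Sigma_d}=r<dn-(d^2+d-1)$. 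Since $p$ is generic, $r$ is the generic and hence maximal rank of $R(\cdot)_{\Sigma_d}$, so $\alpha_\Sigma$ has locally constant rank $r$ near $p$, and by the constant rank theorem $\alpha_\Sigma^{-1}(\alpha_\Sigma(p))$ is, near $p$, a manifold of dimension $dn-r>d^2+d-1$; but the congruence class $(\alpha_n^d)^{-1}(\alpha_n^d(p))=G\cdot p$ has dimension only $d^2+d-1$, so it cannot contain this larger manifold, and every neighbourhood of $p$ therefore contains a $q$ that is $d$-volume equivalent but not congruent to $p$. Hence $(\Sigma,p)$ is not rigid, which is the contrapositive of the claim.

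I expect the main obstacle to be the second step: establishing that the complete complex reconstructs an affinely spanning configuration from its volume data up to an element of $G$, and hence that the space of trivial flexes is precisely the $(d^2+d-1)$-dimensional orbit $G\cdot p$ with nothing else in its vicinity. Once the trivial motions are identified, the remainder is the routine semicontinuity-and-constant-rank bookkeeping; the one point worth stating carefully is that attaining the global maximum of the rank already forces the rank to be locally constant, which is exactly what allows the first implication to be proved with no genericity hypothesis on $p$.
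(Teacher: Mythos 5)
Your proposal is correct and follows essentially the same Asimow--Roth route that the paper (and its cited source) sketch: identify the congruence class of a spanning configuration as the orbit of the $(d^2+d-1)$-dimensional group of volume-preserving affine maps, deduce that $dn-(d^2+d-1)$ is the maximal rank of the rigidity matrix, and finish with lower semicontinuity of rank plus the constant rank theorem. One cosmetic slip: $G$ should consist of affine maps with $\det A=+1$ only, since maps with $\det A=-1$ negate all signed volumes and so $G\cdot p$ would strictly contain the fibre $(\alpha_n^d)^{-1}(\alpha_n^d(p))$; this does not affect the dimension count or any subsequent step of your argument.
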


Lemma \ref{lem:rankofrigmatroid} is proved by observing the space of \defn{infinitesimal flexes} of the framework $(\Sigma,p)$, ie. the infinitesimal velocities of the continuous deformations (flexes) of the vertices.
It is noted, as in \citet{asimow1978rigidity}, that these form the kernel of $R(p)_{\Sigma_d}$, and that the infinitesimal velocities of the trivial flexes account for a $(d^2+d-1)$-dimensional subspace of the space of trivial flexes.
Therefore $dn-(d^2+d-1)$ is the maximum rank that $R(p)$ may achieve and may be achieved by $R(p)_{\Sigma_d}$ if $\Sigma$ has the same number of infinitesimal flexes as the complete graph.

We continue with constructions drawn from the Euclidean graph rigidity setting by using the complete rigidity matrix to define a matroid on the maximal simplices of $K_n^d$, as in \citet{graver1993combinatorial}.

A \defn{matroid} $\mathcal{M}$ is a pair $(E,\mathcal{I})$, where $E$ is a finite \defn{ground set}, and $\mathcal{I}\subseteq\mathcal{P}(E)$ is the set of subsets of $E$ that are \defn{independent} in $\mathcal{M}$.
If $D\subseteq E$, but $D\not\in\mathcal{I}$, then we say that $D$ is \defn{dependent} in $\mathcal{M}$.
If $B\in\mathcal{I}$ and $|I|\leq|B|$, for all $I\subseteq\mathcal{I}$, then $B$ is a \defn{basis} of $\mathcal{M}$, all bases of $\mathcal{M}$ have the same size, called the \defn{rank} of $\mathcal{M}$.
Denote by $\mathcal{B}$ the set of bases of $\mathcal{M}$

Although we defined a matroid as the pair $(E,\mathcal{I})$, matroids are also uniquely determined in terms of their bases (as well as many other equivalent ways).

Associated to $R(p)$ is the row-matroid $\mathcal{F}_n^d(p)$, called the \defn{infinitesimal rigidity matroid} of $(K_n^d,p)$.
For generic configurations $p$ and $q$, $\mathcal{F}_n^d(p)=\mathcal{F}_n^d(q)=:\mathcal{R}_n^d$, call $\mathcal{R}_n^d$ the \defn{($d$-dimensional) rigidity matroid}.

\begin{proposition}\label{prop:basis}
The $d$-dimensional simplicial complex $\Sigma$ on $n\geq d+1$ vertices is locally $d$-volume rigid in $\mathbb{R}^d$ if and only if the $\Sigma_d$ contains a basis of $\mathcal{R}_n^d$.
\end{proposition}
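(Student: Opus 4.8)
The plan is to reduce this to Lemma~\ref{lem:rankofrigmatroid} by translating between the matroid-theoretic statement ``$\Sigma_d$ contains a basis of $\mathcal{R}_n^d$'' and the rank statement ``$\rank(R(p)_{\Sigma_d}) = dn-(d^2+d-1)$'' for generic $p$. First I would recall that $\mathcal{R}_n^d$ is by definition the row-matroid $\mathcal{F}_n^d(p)$ for any generic $p$, so its ground set is $(K_n^d)_d$ and a subset $S$ of $d$-simplices is independent in $\mathcal{R}_n^d$ precisely when the corresponding rows of $R(p)$ are linearly independent. Consequently, for a generic $p$, $\rank(R(p)_{\Sigma_d})$ equals the rank of the set $\Sigma_d$ in the matroid $\mathcal{R}_n^d$. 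I would also observe that the rank of $\mathcal{R}_n^d$ itself is exactly $dn-(d^2+d-1)$: this follows from Lemma~\ref{lem:rankofrigmatroid} applied to the complete complex $K_n^d$, which is certainly $d$-volume rigid, so $\rank(R(p)) = \rank(R(p)_{(K_n^d)_d}) = dn-(d^2+d-1)$ at a generic $p$.

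With these observations in place the proof is a short chain of equivalences. Fix a generic configuration $p \in (\mathbb{R}^d)^n$; such a point exists and, by the first bullet of the Proposition cited from \cite{southgate2023bounds}, local $d$-volume rigidity of $(\Sigma,p)$ does not depend on the choice of generic $p$, and agrees with ``$\Sigma$ is locally $d$-volume rigid.'' Now: $\Sigma$ is locally $d$-volume rigid iff $(\Sigma,p)$ is locally $d$-volume rigid at this generic $p$, iff (by the generic half of Lemma~\ref{lem:rankofrigmatroid}) $\rank(R(p)_{\Sigma_d}) = dn-(d^2+d-1)$, iff the rank of the set $\Sigma_d$ in $\mathcal{R}_n^d$ equals $dn-(d^2+d-1) = \rank(\mathcal{R}_n^d)$, iff $\Sigma_d$ contains a maximum-size independent set of $\mathcal{R}_n^d$, iff $\Sigma_d$ contains a basis of $\mathcal{R}_n^d$. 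The only place requiring a little care in direction of implication is the ``only if'': one needs that local rigidity forces the \emph{full} rank $dn-(d^2+d-1)$ rather than merely some large rank, but this is exactly what the generic direction of Lemma~\ref{lem:rankofrigmatroid} supplies, together with the fact that $dn-(d^2+d-1)$ is the maximum possible rank of $R(p)$.

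I expect the main (minor) obstacle to be bookkeeping about which direction of Lemma~\ref{lem:rankofrigmatroid} needs genericity: the ``sufficient rank $\Rightarrow$ rigid'' direction holds at \emph{every} configuration, while ``rigid $\Rightarrow$ full rank'' needs $p$ generic. Choosing $p$ generic from the outset makes both directions available simultaneously, and then the argument is purely formal matroid theory (a set spans the whole matroid iff it contains a basis, and its matroid rank is computed by the rank of the corresponding submatrix at a generic point, which is a standard property of the row-matroid of a matrix of indeterminates). No genericity subtleties arise beyond invoking the already-stated results, so the write-up should be brief.
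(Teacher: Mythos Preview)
Your proposal is correct and matches the paper's approach exactly: the paper simply states that the proposition follows from Lemma~\ref{lem:rankofrigmatroid}, and what you have written is precisely the unpacking of that assertion. Your care about which direction of the lemma needs genericity, and your observation that $\rank(\mathcal{R}_n^d)=dn-(d^2+d-1)$ by applying the lemma to $K_n^d$, are the right details to fill in.
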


The proof Proposition \ref{prop:basis} follows from Lemma \ref{lem:rankofrigmatroid}.

An alternate way of defining the $d$-dimensional rigidity matroid is as an algebraic matroid.
Given an affine algebraic variety $X\subseteq\mathbb{C}^E$, the \defn{algebraic matroid} associated to $X$, denoted $\mathcal{M}(X)$, is the pair $(E,\mathcal{I})$, where $S\in\mathcal{I}$ if and only if $\overline{\pi_S(X)}=\mathbb{C}^S$.
A thorough introduction to algebraic matroids, and in particular their application to Euclidean graph rigidity, may be found in \citet{rosen2020algebraic}.

Define the \defn{($d$-dimensional) measurement variety}, denoted $M_n^d$, to be the Zariski-closure of the image of $(\mathbb{R}^d)^n$ under $\alpha_n$, it is an affine real algebraic variety in $\mathbb{R}^{\binom{[n]}{d+1}}$ defined over $\mathbb{Q}$.

Meanwhile, define the \defn{complex ($d$-dimensional) measurement variety}, denoted $CM_n^d$, to be the Zariski-closure of the image of $(\mathbb{C}^d)^n$ under $\alpha_n$, it is an affine complex algebraic variety in $\mathbb{C}^{\binom{[n]}{d+1}}$.

\begin{theorem}\label{thm:matroids}
The $d$-dimensional rigidity matroid $\mathcal{R}_n^d$ is the algebraic matroid of $CM_n^d$.
\end{theorem}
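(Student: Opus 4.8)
The plan is to use the standard fact that, in characteristic zero, the algebraic matroid of the image of a polynomial map coincides with the linear matroid on the rows of the Jacobian of that map at a generic point, and then to observe that this Jacobian is exactly the complete rigidity matrix $R(p)$. Since $CM_n^d=\overline{\alpha_n^d((\mathbb{C}^d)^n)}$ and $\alpha_n^d$ is a morphism defined over $\mathbb{Q}$, for any subset $S$ of the $d$-simplices of $K_n^d$ we have $\overline{\pi_S(CM_n^d)}=\overline{(\pi_S\circ\alpha_n^d)((\mathbb{C}^d)^n)}$ by continuity of $\pi_S$. Hence, by the definition of the algebraic matroid, $S$ is independent in $\mathcal{M}(CM_n^d)$ if and only if the polynomial map $\pi_S\circ\alpha_n^d:(\mathbb{C}^d)^n\to\mathbb{C}^S$ is dominant.

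Next I would invoke generic smoothness: in characteristic zero a morphism of smooth varieties has surjective differential over a dense open set, and the dimension of the image equals the generic rank of the Jacobian. So $\pi_S\circ\alpha_n^d$ is dominant (equivalently, has image of dimension $|S|$) if and only if its Jacobian has rank $|S|$ at some, equivalently a generic, point. The Jacobian of $\pi_S\circ\alpha_n^d$ at a configuration $p$ is precisely the row-restriction $R(p)_S$ of the complete rigidity matrix $R(p)=\mathrm{d}\alpha_n^d(p)$, since $\pi_S$ is linear. Because the entries of $R$ are polynomials over $\mathbb{Q}$, the locus where $\rank(R_S)<|S|$ is a proper $\mathbb{Q}$-defined subvariety whenever $R_S$ is not identically rank-deficient, so any point generic over $\mathbb{Q}$ avoids it. Thus $S$ is independent in $\mathcal{M}(CM_n^d)$ if and only if $R(p)_S$ has full rank for $p\in(\mathbb{C}^d)^n$ generic over $\mathbb{Q}$.

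It remains to reconcile this with the definition of $\mathcal{R}_n^d$, which is the row matroid of $R(p)$ for $p$ a generic point of the \emph{real} configuration space $(\mathbb{R}^d)^n$. A generic real $p$ is generic over $\mathbb{Q}$, so it avoids the vanishing locus of every $k\times k$ minor of $R$ that is not identically zero on $(\mathbb{R}^d)^n$. Because each such minor is a polynomial with rational coefficients, it vanishes identically on $(\mathbb{R}^d)^n$ exactly when it vanishes identically on $(\mathbb{C}^d)^n$. Consequently the collection of subsets $S$ for which $R(p)_S$ attains full rank is the same whether $p$ is taken generic over $\mathbb{Q}$ in $(\mathbb{R}^d)^n$ or in $(\mathbb{C}^d)^n$. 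Combining the two previous paragraphs, $S$ is independent in $\mathcal{R}_n^d$ if and only if it is independent in $\mathcal{M}(CM_n^d)$, which is the claim.

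The main obstacle is making the first two steps precise, in particular the passage ``dominant $\iff$ Jacobian of full rank at a generic point'', which rests on generic smoothness together with the equality $\dim\overline{\phi(X)}=\rank\,\mathrm{d}\phi$ at a generic point of $X$, for $\phi$ a morphism. This is exactly the mechanism recorded in treatments of algebraic matroids (e.g.\ \citet{rosen2020algebraic}); once it is in hand, the remaining work is the bookkeeping relating $\mathbb{Q}$- and $\mathbb{R}$-genericity to $\mathbb{C}$-genericity and the identification of the relevant Jacobian block with $R(p)_S$, both of which are routine.
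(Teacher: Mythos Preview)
Your argument is correct and follows the same overall strategy as the paper: reduce independence in $\mathcal{M}(CM_n^d)$ to the rank of $R(p)_S$ at a generic point, then argue that this rank is the same whether $p$ is generic in $(\mathbb{R}^d)^n$ or in $(\mathbb{C}^d)^n$.

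The packaging differs in a way worth noting. The paper does not cite the ``algebraic matroid $=$ Jacobian row matroid'' fact as a black box; instead it works at the level of tangent spaces of the image. It first proves auxiliary lemmas that (i) generic points map to generic (hence smooth) points under algebraic maps, and (ii) for a real algebraic set cut out by linear equations, the complexification is cut out by the same equations and has the same dimension. It then observes that the tangent spaces $T_R$ and $T_C$ of $\overline{\pi_H(M_n^d)}$ and $\overline{\pi_H(CM_n^d)}$ at $(\pi_H\circ\alpha_n^d)(p)$ are defined by the \emph{same} linear equations (namely the rows of $\mathrm{d}(\pi_H\circ\alpha_n^d)_p$), so $T_C=(T_R)^C$ and hence $\dim_{\mathbb{C}}\overline{\pi_H(CM_n^d)}=\rank R(p)_H$. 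Your route bypasses the tangent-space complexification entirely: you stay on the source side, note that the minors of $R$ are polynomials over $\mathbb{Q}$, and conclude directly that generic rank over $\mathbb{R}$ and over $\mathbb{C}$ agree. This is cleaner and shorter, but trades self-containment for a citation to generic smoothness (e.g.\ via \citet{rosen2020algebraic}); the paper's version is more explicit about why the real and complex dimensions match.
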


A \defn{semi-algebraic set} is a set $S\subseteq\mathbb{R}^N$ defined by polynomial equalities and inequalities as follows:
\begin{equation*}
    S=\{p\in\mathbb{R}^N:f_i(p)=0, g_j>0\text{, }\forall i\in I, j\in J\},
\end{equation*}
where $\{f_i,g_j:i\in I,j\in J\}\subseteq\mathbb{R}[x_1,\dots,x_N]$ is finite.
An \defn{algebraic set} $A$ is a semi-algebraic set where $J=\varnothing$, write
\begin{equation*}
    A=\mathbb{V}_{\mathbb{R}}(f_i:i\in I).
\end{equation*}
If $S$ is a semi-algebraic (resp. algebraic) set defined as above with $\{f_i,g_j:i\in I,j\in J\}\subseteq\mathbf{k}[x_1,\dots,x_N]\subseteq\mathbb{R}[x_1,\dots,x_N]$, then we say that $S$ is defined over $\mathbf{k}$.
The \defn{Zariski closure} of a semi-algebraic set $S$, denoted $\overline{S}$, is the smallest algebraic set containing $S$.
If $A\subseteq\mathbb{R}^M$ is an algebraic set and $\psi:A\rightarrow\mathbb{R}^N$ is an algebraic map (ie. $\psi=(\psi_1,\dots,\psi_N)\subseteq(\mathbf{Q}[x_1,\dots,x_M])^N$), then $\psi(A)$ is a semi-algebraic set and $\overline{\psi(A)}$ is an algebraic set.

An  algebraic set $A$ is said to be \defn{irreducible} if there are no algebraic sets $A_1,A_2$ that are proper subsets of $A$ so that $A=A_1\cup A_2$.
A semi-algebraic set is said to be irreducible if its Zariski closure is irreducible as an algebraic set.
The \defn{dimension} of a semi-algebraic (resp. algebraic) set $S\subseteq\mathbb{R}^N$, denoted $\dim(S)$, is the largest $D$ so that there exists an open subset $U\subseteq\mathbb{R}^N$ so that $U\cap S$ is isomorphic to $\mathbb{R}^D$.
A point $p$ in the semi-algebraic (resp. algebraic) set $S$ is smooth if the open neighbourhood of $p$ in $S$ is $\dim(S)$-dimensional.

\begin{lemma}
If $S\subseteq\mathbb{R}^M$ is an irreducible algebraic set defined over $\mathbf{k}$ and $\psi:S\rightarrow\mathbb{R}^N$ is an algebraic map, then $\psi(S)$ is an irreducible semi-algebraic set defined over $\mathbf{k}$.
\end{lemma}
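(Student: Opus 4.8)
The assertion bundles three claims about $\psi(S)$: that it is semi-algebraic, that it is irreducible, and that it is defined over $\mathbf{k}$. The first is nothing more than the instance of the Tarski--Seidenberg theorem already recorded in the paragraph preceding the lemma, namely that $\psi(A)$ is semi-algebraic and $\overline{\psi(A)}$ an algebraic set whenever $A$ is algebraic and $\psi$ algebraic; I would simply invoke it. For irreducibility I would argue by contradiction on the (algebraic) set $\overline{\psi(S)}$: if $\overline{\psi(S)}=A_1\cup A_2$ with each $A_i$ a proper algebraic subset, then, because $\psi$ is given by polynomials, each $\psi^{-1}(A_i)$ is an algebraic set (write $A_i=\mathbb{V}_{\mathbb{R}}(g_{i,1},\dots)$ and note $\psi^{-1}(A_i)=\mathbb{V}_{\mathbb{R}}(g_{i,1}\circ\psi,\dots)$), so $S\cap\psi^{-1}(A_i)$ is an algebraic subset of $S$. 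Since $\psi(S)\subseteq A_1\cup A_2$ forces $S=(S\cap\psi^{-1}(A_1))\cup(S\cap\psi^{-1}(A_2))$, irreducibility of $S$ gives, say, $S=S\cap\psi^{-1}(A_1)$, whence $\psi(S)\subseteq A_1$ and $\overline{\psi(S)}\subseteq A_1$, a contradiction.

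For the field of definition, write $I_{\mathbf{k}}(S)$ for the ideal of those $f\in\mathbf{k}[x_1,\dots,x_M]$ vanishing on $S$. Two easy observations: because $S$ is cut out by polynomials $f_1,\dots,f_r\in\mathbf{k}[x]$ and each $f_i\in I_{\mathbf{k}}(S)$, one has $S=\mathbb{V}_{\mathbb{R}}(I_{\mathbf{k}}(S)\cdot\mathbb{R}[x])$; and $I_{\mathbf{k}}(S)$ is prime, by the same decomposition trick as above applied to a product $fg\in I_{\mathbf{k}}(S)$. I would then set $\mathfrak{q}:=\{g\in\mathbf{k}[y_1,\dots,y_N]:g\circ\psi\in I_{\mathbf{k}}(S)\}$, a prime ideal of $\mathbf{k}[y]$ (here it is used that $\psi$ has coefficients in $\mathbf{k}$, indeed in $\mathbb{Q}$), and claim that $\overline{\psi(S)}=\mathbb{V}_{\mathbb{R}}(\mathfrak{q}\cdot\mathbb{R}[y])$; this would exhibit $\overline{\psi(S)}$ as defined over $\mathbf{k}$ and finish the proof. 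One inclusion is immediate, since each $g\in\mathfrak{q}$ satisfies $g\circ\psi\in I_{\mathbf{k}}(S)\subseteq I_{\mathbb{R}}(S)$ and hence vanishes on $\psi(S)$. The content is the reverse inclusion $\mathbb{V}_{\mathbb{R}}(\mathfrak{q}\cdot\mathbb{R}[y])\subseteq\overline{\psi(S)}$, equivalently the Zariski-density of $\psi(S)$ in $\mathbb{V}_{\mathbb{R}}(\mathfrak{q}\cdot\mathbb{R}[y])$.

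That density is the step I expect to be the real obstacle, and it is precisely where one must work over $\mathbb{R}$ and cannot merely complexify: passing to the Zariski closure over $\mathbb{C}$ and using Chevalley's theorem together with Galois descent would only control the complex closure of $\psi$ applied to the complexification of $S$, and the real points of a $\mathbf{k}$-variety need not be dense in it (for instance $\mathbb{V}_{\mathbb{R}}(x^3-2,y)$ is a single point while its complexification is three points). The plan is to handle it through the real Nullstellensatz: if $h\in\mathbb{R}[y]$ vanishes on $\psi(S)$ then $h\circ\psi\in I_{\mathbb{R}}(S)$, which by the real Nullstellensatz is the real radical of $I_{\mathbf{k}}(S)\cdot\mathbb{R}[x]$, and one must transport this membership, through the contraction defining $\mathfrak{q}$, to the statement that $h$ lies in the real radical of $\mathfrak{q}\cdot\mathbb{R}[y]$, i.e.\ that $h$ vanishes on all of $\mathbb{V}_{\mathbb{R}}(\mathfrak{q}\cdot\mathbb{R}[y])$. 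The two inputs that should make this run are the irreducibility of $\overline{\psi(S)}$ established above, which prevents $\psi(S)$ from being confined to a proper irreducible component of $\mathbb{V}_{\mathbb{R}}(\mathfrak{q}\cdot\mathbb{R}[y])$, and the existence of a $\mathbf{k}$-generic real point $p$ of $S$, for which $I_{\mathbf{k}}(S)$ and $\mathfrak{q}$ are simply the ideals of $\mathbf{k}$-polynomials vanishing at $p$ and at $\psi(p)$ respectively; alternatively, this density/descent statement can be quoted directly from the standard treatment of fields of definition in real algebraic geometry (Bochnak, Coste and Roy, \emph{Real Algebraic Geometry}). By contrast the semi-algebraicity and the irreducibility are essentially formal.
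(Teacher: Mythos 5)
Your irreducibility argument is the same pull-back-the-decomposition idea as the paper's, and your execution is in fact tighter: the paper pulls back the semi-algebraic pieces $T_{\alpha}\cap\psi(S)$ and then has to pass to Zariski closures, asserting without justification that $\overline{S_1}$ and $\overline{S_2}$ are proper subsets of $S$, whereas you pull back the algebraic sets $A_i$ themselves, so the pieces $S\cap\psi^{-1}(A_i)$ are already algebraic and the dichotomy forced by irreducibility of $S$ gives $\psi(S)\subseteq A_1$, hence $\overline{\psi(S)}\subseteq A_1$, an immediate contradiction. The semi-algebraicity step is, as in the paper, just Tarski--Seidenberg.

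Where you diverge is the field of definition, and there your route is heavier than necessary and aimed at a slightly different object. With the paper's definition, the claim is that the semi-algebraic set $\psi(S)$ itself admits a description by polynomial equalities and inequalities with coefficients in $\mathbf{k}$; your argument via $I_{\mathbf{k}}(S)$, the contracted ideal $\mathfrak{q}$ and the real Nullstellensatz addresses only the Zariski closure $\overline{\psi(S)}$, and its crucial step (density of $\psi(S)$ in $\mathbb{V}_{\mathbb{R}}(\mathfrak{q}\cdot\mathbb{R}[y])$) is left open except for the fallback citation to Bochnak--Coste--Roy. Neither is needed: the quantifier-elimination form of Tarski--Seidenberg that you already invoke for semi-algebraicity, applied to the formula $\exists x\,(x\in S\ \wedge\ y=\psi(x))$, whose coefficients lie in $\mathbf{k}$, returns a quantifier-free description of $\psi(S)$ with coefficients in $\mathbf{k}$, since elimination does not enlarge the coefficient field. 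That observation is all that is required (the paper's own proof is silent on the field of definition and establishes only irreducibility), so your proposal is correct in substance and matches the paper on the part the paper actually proves, but the real-Nullstellensatz excursion should be replaced by this one-line remark.
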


\begin{proof}
Write $T=\overline{\psi(S)}$ and suppose, for the sake of contradiction, that $T_1,T_2$ are algebraic sets that are proper subsets of $T$ with $T=T_1\cup T_2$.
Then
\begin{equation*}
    \begin{split}
    \psi^{-1}(\psi(S))
    &=\psi^{-1}((T_1\cap\psi(S))\cup(T_2\cap\psi(S)))\\
    &=(S\cap\psi^{-1}(T_1\cap\psi(S)))\cup(S\cap\psi^{-1}(T_2\cap\psi(S)))\\
    &=S_1\cup S_2.
\end{split}
\end{equation*}
Each of $S_1$ and $S_2$ are semi-algebraic sets and $\overline{S_1\cup S_2}=S$, indeed
\begin{equation*}
\begin{split}
    S_{\alpha}=\{&p\in\mathbb{R}^M:p\in s\text{, }\psi(p)\in(T_{\alpha}\cap\psi(S))\}\\
    =\{&p\in\mathbb{R}^M:f_i(p)=0\text{, }g_j(p)>0\text{, }h_k(\psi(p))=0\text{, }r_l(\psi(p))>0,\\
    &\forall i\in I_{\alpha}, j\in J_{\alpha}, k\in K_{\alpha}, l\in L_{\alpha}\},
\end{split}
\end{equation*}
where $I_{\alpha}$ and $J_{\alpha}$ are the index sets of the polynomials whose respective equalities and inequalities define $S_{\alpha}$, likewise for $K_{\alpha}$ and $L_{\alpha}$ with $T_{\alpha}$, for $\alpha\in\{1,2\}$.
Since $\psi\in(\mathbb{Q}[x_1,\dots,x_M])^N$, each of the $h_k\circ\psi$ and $r_l\circ\psi$ are polynomials in $\mathbf{k}[x_1,\dots,x_M]$.
Hence $S_1$ and $S_2$ are semi-algebraic sets.

Next, suppose that $p\in S\setminus\overline{S_1\cup S_2}$, then $\psi^{-1}(\psi(p))\in S_1\cup S_2$, a contradiction.
Meanwhile, $S_1\cup S_2\subseteq S$, and $S$ is an algebraic set containing $S_1\cup S_2$, so $\overline{S_1\cup S_2}\subseteq S$.
Therefore $S=\overline{S_1\cup S_2}$.

Finally, since $S=\overline{S_1\cup S_2}=\overline{S_1}\cup\overline{S_2}$, where $\overline{S_1}$ and $\overline{S_2}$ are algebraic sets that are proper subsets of $S$, we have that $S$ is not irreducible, a contradiction.
\end{proof}

Therefore, noting that the complete $d$-volume measurement map $\alpha_n^d$ is algebraic, $\alpha_n^d((\mathbb{R}^d)^n)$ and $M_n^d=\overline{\alpha_n^d((\mathbb{R}^d)^n)}$ are irreducible semi-algebraic and algebraic sets defined over $\mathbb{Q}$ respectively.

\begin{lemma}
Let $S\subseteq\mathbb{R}^M$ and $T\subseteq\mathbb{R}^N$ be irreducible semi-algebraic sets defined over some finite field extension $\mathbf{k}$ of $\mathbb{Q}$, $\psi:S\rightarrow T$ a surjective algebraic map.
If $p\in S$ is generic, then $\psi(p)$ is generic in $T$.
\end{lemma}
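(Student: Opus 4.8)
The plan is to reduce the statement to a field-theoretic fact about generic points: a point $p \in S$ is generic precisely when its coordinates generate a field of transcendence degree $\dim(S)$ over $\mathbf{k}$ inside the function field of $S$, and equivalently when $p$ avoids every proper $\mathbf{k}$-subvariety of $S$. So suppose, for contradiction, that $p \in S$ is generic but $\psi(p)$ is not generic in $T$. Then there is some $g \in \mathbf{k}[y_1,\dots,y_N]$ with $g(T) \neq 0$ but $g(\psi(p)) = 0$. The composite $g \circ \psi$ is a polynomial in $\mathbf{k}[x_1,\dots,x_M]$ (using that $\psi$ has components in $\mathbf{k}[x_1,\dots,x_M]$, which is where the hypothesis that both sets are defined over the same finite extension $\mathbf{k}$ is used), and $(g\circ\psi)(p) = 0$.

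First I would argue that $g \circ \psi$ does not vanish identically on $S$. If it did, then $g$ would vanish on $\psi(S)$, hence on $\overline{\psi(S)}$; but $\psi$ is surjective onto $T$, so $\psi(S) = T$ and $\overline{\psi(S)} = T$ (since $T$ is its own Zariski closure, being an irreducible algebraic set — or, if $T$ is merely semi-algebraic, we pass to $\overline{T}$ and use that $g(\overline{T}) = g(T) \neq 0$ would still fail). This contradicts $g(T) \neq 0$. So $g \circ \psi$ is a polynomial over $\mathbf{k}$ that does not vanish on $S$ but vanishes at $p$, contradicting the genericity of $p$ in $S$. This completes the argument modulo the bookkeeping about whether we phrase everything in terms of the irreducible algebraic sets $\overline{S}$, $\overline{T}$ or the semi-algebraic sets themselves; since genericity is defined relative to the ambient real space and the vanishing of rational polynomials, and since $\overline{\psi(\overline S)} = \overline{\psi(S)} = \overline{T}$ by the previous lemma, one can freely replace $S$ and $T$ by their Zariski closures without changing which points are generic.

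The main obstacle is being careful about the two places where surjectivity and the shared field of definition enter: we need $\psi(S)$ to be Zariski-dense in $T$ (so that a polynomial vanishing on $\psi(S)$ vanishes on $T$), which is exactly surjectivity up to closure, and we need $g \circ \psi$ to have coefficients in $\mathbf{k}$ rather than some larger field, which is why the hypothesis insists that $S$, $T$, and hence $\psi$ are all defined over the single finite extension $\mathbf{k}$ of $\mathbb{Q}$. A secondary subtlety is the definition of generic used in the paper, which quantifies over $f \in \mathbb{Q}[x_1,\dots,x_{dn}]$; to use it with coefficients in a finite extension $\mathbf{k}$, I would note that a polynomial over $\mathbf{k}$ vanishing at $p$ but not on $S$ can be converted to one over $\mathbb{Q}$ with the same property by taking a norm (product of Galois conjugates over $\mathbb{Q}$), or alternatively observe that the paper's later usage only needs the $\mathbf{k} = \mathbb{Q}$ case where this is moot. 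I would include a one-line remark to that effect rather than belabor it.
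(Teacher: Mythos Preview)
Your argument is correct and follows essentially the same route as the paper: assume $\psi(p)$ fails to be generic via some polynomial $g$, pull back to $g\circ\psi$, observe this vanishes at $p$ but not on all of $S$ (since $\psi$ is surjective onto $T$), and contradict the genericity of $p$. The paper's proof is a terse four-line version of exactly this; your additional remarks on Zariski closures and on reconciling coefficients in $\mathbf{k}$ versus $\mathbb{Q}$ via norms are more careful than the paper, which simply writes $\mathbb{Q}$ in the proof despite the statement allowing a finite extension.
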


\begin{proof}
Let $f\in\mathbb{Q}[y_1,\dots,y_N]$ be a polynomial that does not vanish on $T$ and suppose that $f(\psi(p))=0$.
Then $g=f\circ\psi\in\mathbb{Q}[x_1,\dots,x_M]$ and $g(p)=0$.
Finally, since $f$ does not vanish on all of $T=\psi(S)$, there exists $q\in S$ so that $g(q)=f(\psi(q))\neq 0$.
Therefore $g$ does not vanish on all of $S$, contradicting the genericity of $p$.
\end{proof}

Therefore, if $p$ is a generic configuration in $(\mathbb{R}^d)^n$, then its measurement under $\alpha_n^d$ is a generic point in $M_n^d$, and is therefore smooth.

Let $A\subseteq\mathbb{R}^N$ be a set.
The \defn{complexification} of $A$ is the set
\begin{equation*}
    A^C=A\otimes_{\mathbb{R}}\mathbb{C}=\{a\otimes 1 + b\otimes i:a,b\in A\}(=\{a+bi:a,b\in A\}),
\end{equation*}
along with complex multiplication defined
\begin{equation*}
    (a_1 + b_1i)(a_2+b_2i)=(a_1a_2-b_1b_2)+(a_1b_2+a_2b_1)i,
\end{equation*}
for all $a_1,a_2,b_1,b_2\in A$.
If $A\subseteq\mathbb{R}^M$ and $B\subseteq\mathbb{C}^N$, then the real dimension of $A$ is denoted by $\dim_{\mathbb{R}}(A)$ and the complex dimension of $B$ is denoted by $\dim_{\mathbb{C}}(B)$.

\begin{lemma}\label{lem:complexification}
If $L=\mathbb{V}_{\mathbb{R}}(l_1,\dots,l_t)\subseteq\mathbb{R}^N$ is an algebraic set, where $\deg(l_i)=1$, for $1\leq i\leq t$, then $L^C=\mathbb{V}_{\mathbb{C}}(l_1,\dots,l_t)\subseteq\mathbb{C}^N$.
\end{lemma}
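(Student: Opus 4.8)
The plan is to prove the two inclusions $L^C\subseteq\mathbb{V}_{\mathbb{C}}(l_1,\dots,l_t)$ and $\mathbb{V}_{\mathbb{C}}(l_1,\dots,l_t)\subseteq L^C$ separately, the whole content being that a degree-one polynomial with real coefficients splits cleanly across real and imaginary parts. First I would record the shape of the defining equations: since $L$ is a real algebraic set cut out by polynomials of degree one, I take $l_i(x)=\sum_{j=1}^N a_{ij}x_j$ with all $a_{ij}\in\mathbb{R}$ (the homogeneity being what the degree-one hypothesis provides in the situations where this lemma is applied, i.e. $L$ a linear subspace such as a space of flexes or a tangent space; if one allows a constant term one should first translate $L$ by one of its real points). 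Equivalently, $L=\ker A$ for $A=(a_{ij})\in\mathbb{R}^{t\times N}$, and $\mathbb{V}_{\mathbb{C}}(l_1,\dots,l_t)$ is the kernel of $A$ regarded as a linear map $\mathbb{C}^N\to\mathbb{C}^t$.

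For the forward inclusion, let $z=a+bi$ with $a,b\in L$. Each $l_i$ is additive and has real coefficients, so $l_i(a+bi)=l_i(a)+i\,l_i(b)=0+i\cdot 0=0$; hence $z\in\mathbb{V}_{\mathbb{C}}(l_1,\dots,l_t)$. For the reverse inclusion, let $z\in\mathbb{C}^N$ satisfy $l_i(z)=0$ for all $i$, and write $z=a+bi$ with $a$ its real part and $b$ its imaginary part, both in $\mathbb{R}^N$. As before $l_i(z)=l_i(a)+i\,l_i(b)$ with $l_i(a),l_i(b)\in\mathbb{R}$, and a complex number vanishes iff its real and imaginary parts both vanish, so $l_i(a)=l_i(b)=0$ for every $i$. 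Thus $a,b\in L$ and $z\in L^C$, giving equality. One can also phrase this uniformly as the exactness of $-\otimes_{\mathbb{R}}\mathbb{C}$: $L^C=\ker(A)\otimes_{\mathbb{R}}\mathbb{C}=\ker\!\big(A\otimes_{\mathbb{R}}\mathbb{C}\big)=\mathbb{V}_{\mathbb{C}}(l_1,\dots,l_t)$.

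I do not anticipate a real obstacle: the statement holds because the equations are linear and defined over $\mathbb{R}\subseteq\mathbb{C}$, so passing to complex solutions commutes with base change. The one place that genuinely uses the hypothesis is the splitting $l_i(a+bi)=l_i(a)+i\,l_i(b)$, which needs both real coefficients and additivity; it already fails in degree two — for instance $\mathbb{V}_{\mathbb{R}}(x^2+1)=\varnothing$ while $\mathbb{V}_{\mathbb{C}}(x^2+1)=\{\pm i\}$ — which is a good check on why ``degree one'' (and the linearity of $L$) cannot be dropped.
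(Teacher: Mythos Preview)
Your proof is correct and follows essentially the same approach as the paper: write $z=u+vi$, use linearity and real coefficients to split $l_j(u+vi)=l_j(u)+i\,l_j(v)$, and conclude from the vanishing of real and imaginary parts that $u,v\in L$. The paper presents this as a single chain of set equalities rather than two inclusions, but the content is identical. Your remark about constant terms is a worthwhile caveat that the paper glosses over; in the application (tangent spaces) the $l_i$ are indeed homogeneous, so no issue arises.
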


Here, given an ideal $I\subseteq\mathbb{C}[x_1,\dots, x_N]$, we write $\mathbb{V}_{\mathbb{C}}(I)=\{p\in\mathbb{C}^N:f(p)=0\text{, }\forall f\in I\}$.

\begin{proof}
Proceeding directly:
\begin{equation*}
\begin{split}
    \mathbb{V}_{\mathbb{C}}(l_1,\dots,l_t)&=\{(u+vi)\in\mathbb{C}^N:l_j(u+vi)=0\text{, }\forall 1\leq j\leq t\}\\
    &=\{u+vi:u,v\in\mathbb{R}^N:l_j(u+vi)=0\text{, }\forall 1\leq j\leq t\}\\
    &=\{u+vi:u,v\in\mathbb{R}^N:l_j(u)=l_j(v)=0\text{, }\forall 1\leq j\leq t\}\\
    &=\{u+vi:u,v\in\mathbb{R}^N:u,v\in L\}=L^C.
\end{split}
\end{equation*}
\end{proof}

\begin{lemma}\label{lem:compdim}
Suppose that $L=\mathbb{V}_{\mathbb{R}}(l_1,\dots,l_t)\subseteq\mathbb{R}^N$ is an algebraic set, where $\deg(l_1),\dots,\deg(l_t)=1$.
Then $\dim_{\mathbb{R}}(L)=\dim_{\mathbb{C}}(L^C)$.
\end{lemma}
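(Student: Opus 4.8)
The plan is to reduce the claim to the field-independence of matrix rank. Since each $l_i$ has degree one, write $l_i(x)=\sum_{j=1}^{N}A_{ij}x_j-b_i$ with $A=(A_{ij})\in\mathbb{R}^{t\times N}$ and $b=(b_i)\in\mathbb{R}^{t}$, so that $L=\{x\in\mathbb{R}^N:Ax=b\}$; by Lemma \ref{lem:complexification}, $L^C=\mathbb{V}_{\mathbb{C}}(l_1,\dots,l_t)=\{x\in\mathbb{C}^N:Ax=b\}$. Thus $L$ and $L^C$ are the real and the complex solution sets of one and the same linear system with real coefficients, and the statement is that these two solution sets have the same dimension over their respective fields.

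If $L=\varnothing$, then $L^C=\varnothing$ as well, directly from the definition of the complexification $(\cdot)^C$, and the claim is vacuous; so I would assume $L\neq\varnothing$ and fix $p_0\in L$. Translating by $p_0$ identifies $L$ with $\kernel_{\mathbb{R}}(A)\subseteq\mathbb{R}^N$ and $L^C$ with $\kernel_{\mathbb{C}}(A)\subseteq\mathbb{C}^N$. A small ball about $p_0$ in $\mathbb{R}^N$ (resp.\ $\mathbb{C}^N$) meets $L$ (resp.\ $L^C$) in a chart isomorphic to a ball in $\mathbb{R}^{\nullity_{\mathbb{R}}(A)}$ (resp.\ $\mathbb{C}^{\nullity_{\mathbb{C}}(A)}$), which both identifies the paper's semi-algebraic dimension with the vector-space dimension on these affine subspaces and confirms that $\dim_{\mathbb{C}}(L^C)$ is genuinely the complex vector-space dimension. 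Hence $\dim_{\mathbb{R}}(L)=N-\rank_{\mathbb{R}}(A)$ and $\dim_{\mathbb{C}}(L^C)=N-\rank_{\mathbb{C}}(A)$ by the rank--nullity theorem over each field.

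The remaining step is the identity $\rank_{\mathbb{R}}(A)=\rank_{\mathbb{C}}(A)$: the rank of a matrix with real entries is the largest size of a square submatrix with nonzero determinant, and each such minor is a fixed real number, independent of whether $A$ is viewed over $\mathbb{R}$ or over $\mathbb{C}$; so the two ranks coincide. Combining the displayed equalities gives $\dim_{\mathbb{R}}(L)=\dim_{\mathbb{C}}(L^C)$.

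I do not anticipate a serious obstacle: the entire content is the invariance of matrix rank under the extension $\mathbb{R}\subseteq\mathbb{C}$ together with the elementary structure of solution sets of linear systems. The one thing requiring care is the bookkeeping around the definition of dimension — verifying that, restricted to a linear (affine) space, the paper's semi-algebraic dimension agrees with the vector-space dimension, and that passing to the complexification doubles the real dimension, so that it is $\dim_{\mathbb{C}}$ of $L^C$, and not $\dim_{\mathbb{R}}$ of $L^C\subseteq\mathbb{R}^{2N}$, that matches $\dim_{\mathbb{R}}(L)$.
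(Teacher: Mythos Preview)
Your proposal is correct and is precisely a fleshed-out version of what the paper gestures at: the paper does not give a proof at all, merely recording that ``the proof of Lemma~\ref{lem:compdim} follows from facts about the intersections of hyperplanes.'' Your reduction to rank--nullity and the field-independence of matrix rank is exactly that argument made explicit, so there is nothing to compare.
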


The proof of Lemma \ref{lem:compdim} follows from facts about the intersections of hyperplanes.

\begin{proof}[Proof of Theorem \ref{thm:matroids}]
We will show that, given a generic configuration $p\in(\mathbb{R}^d)^n$ and a subset $H\subseteq\binom{[n]}{d+1}$, the tangent space of the projection of the onto co-ordinates indexed by $H$ of the complex measurement variety, $T_C=T_{(\pi_H\circ\alpha_n^d)(p)}\overline{\pi_H(CM_n^d)}$, is the complexification of the tangent space of the corresponding projection of the measurement variety, $T_R=T_{(\pi_H\circ\alpha_n^d)(p)}\overline{\pi_H(M_n^d)}$.

Firstly, $CM_n^d$ is the image of $(\mathbb{C}^d)^n$ under an algebraic map.
Therefore, if $p$ is generic in $(\mathbb{R}^d)^n$, then $\alpha_n^d(p)$ is generic in $CM_n^d\subseteq\mathbb{C}^{\binom{[n]}{d+1}}$ and $(\pi_H\circ\alpha_n^d)(p)$ is generic in $\overline{\pi_H(CM_n^d)}\subseteq\mathbb{C}^H$.
Therefore $(\pi_H\circ\alpha_n^d)(p)$ is smooth in $\overline{\pi_H(CM_n^d)}$.

Next, notice that the tangent spaces $T_R$ and $T_C$ are defined by the vanishing of ideals $I_R\subseteq\mathbb{R}[x_1,\dots,x_{dn}]$ and $I_C\subseteq\mathbb{C}[z_1,\dots,z_{dn}]$ with the same generating sets.
Indeed, $T_R=\mathbb{V}_{\mathbb{R}}(\mathrm{d}(\pi_H\circ\alpha_n^d)_px:h\in H)$ and $T_C=\mathbb{V}_{\mathbb{C}}(\mathrm{d}(\pi_H\circ\alpha_n^d)z:h\in H)_p$, where $\mathrm{d}(\pi_H\circ\alpha_n^d)_p$ is the differential of the algebraic map $(\pi_H\circ\alpha_n^d)$, and $x$ and $z$ represent variables in $(\mathbb{R}^d)^n$ and $(\mathbb{C}^d)^n$ respectively.

By Lemma \ref{lem:complexification}, $T_C=(T_R)^C$ and so by Lemma \ref{lem:compdim}, \begin{equation*}
\begin{split}
    \dim_{\mathbb{C}}(\overline{\pi_H(CM_n^d)})&=\dim_{\mathbb{C}}(T_C)\\
    &=\dim_{\mathbb{R}}(T_R)\\
    &=\rank(\mathrm{d}(\pi_H\circ\alpha_n^d)_p)\\
    &=\rank(R(p)_H).
\end{split}
\end{equation*}
We have that $\dim_{\mathbb{C}}(\overline{\pi_H(CM_n^d)})$ if and only if $\overline{\pi_H(CM_n^d)}=\mathbb{C}^H$.

Therefore, independent sets in $\mathcal{R}_n^d$, $H\subseteq\binom{[n]}{d+1}$ so that $\rank(R(p)_H)=|H|$, are precisely the independent sets in $\mathcal{M}(CM_n^d)$, $H'\subseteq\binom{[n]}{d+1}$ so that $\overline{\pi_{H'}(CM_n^d)}=\mathbb{C}^{H'}$.
\end{proof}

\begin{example}\label{ex:rigid}
Consider the 2-dimensional simplicial complex $\Sigma$ defined by its maximal simplices
\begin{equation*}
    \Sigma_2=\{123,124,134\},
\end{equation*}
Let $p=(p(1)_1,p(1)_2,\dots,p(4)_2)\in(\mathbb{R}^2)^4$ be a configuration of $\Sigma$ in $\mathbb{R}^2$.

First we calculate the rigidity matrix of $(\Sigma,p)$:
\begin{equation*}
\begin{split}
    R(p)_{\Sigma_2}=&\left[\begin{matrix}
    p(3)_2-p(2)_2 & p(2)_1-p(3)_1 & p(1)_2-p(3)_2 & p(3)_1-p(1)_1 & \\
    p(4)_2-p(2)_2 & p(2)_1-p(4)_1 & p(1)_2-p(4)_2 & p(4)_1-p(1)_1 & \dots \\
    p(4)_2-p(3)_2 & p(3)_1-p(4)_1 & 0 & 0 & 
    \end{matrix}\right. \\
    &\left.\begin{matrix}
    & p(2)_2-p(1)_2 & p(1)_1-p(2)_1 & 0 & 0 \\
    \dots & 0 & 0 & p(2)_2-p(1)_1 & p(1)_1-p(2)_1 \\
    & p(1)_2-p(4)_2 & p(4)_1-p(1)_1 & p(3)_2-p(1)_1 & p(1)_1-p(3)_1
    \end{matrix}\right],
\end{split}
\end{equation*}
which has rank $3=2.4-(2^2+2-1)$, so $(\Sigma,p)$ is rigid in $\mathbb{R}^2$.
Since $f(\Sigma_2)=3$, $\Sigma_2$ forms a basis in $\mathcal{R}_4^2$.

By Theorem \ref{thm:matroids}, $\overline{\pi_{\Sigma_2}(M_4^2)}=\mathbb{R}^{\Sigma_2}$ and $\overline{\pi_{\Sigma_2}(CM_4^2)}=\mathbb{C}^{\Sigma_2}$.
\end{example}

In order to combinatorially characterise the $d$-dimensional rigidity matroid $\mathcal{R}_n^d$, we will relate it to the algebraic matroid of the $(d,n-1)$-Grassmannian variety, the matroid of which has been combinatorially characterised in dimensions 1 and 2.

The \defn{$(d,n-1)$-Grassmannian (variety)}, denoted $\mathrm{Gr}(d,n-1)$, is the projective algebraic variety that parametrises $d$-dimensional linear subspaces of $\mathbb{C}^{n-1}$. It may be embedded in $\mathbb{CP}^{\binom{n-1}{d}-1}$ via the \defn{Plücker embedding}
\begin{equation*}
    \mathrm{Pl}:\mathrm{Gr}(d,n-1)\hookrightarrow\mathbb{CP}^{\binom{n-1}{d}-1},
\end{equation*}
which lists out (as homogeneous co-ordinates) the $d\times d$ minors of the $d\times(n-1)$ basis matrices of the linear subspaces parametrised by $\mathrm{Gr}(d,n-1)$.
Affine charts for $\mathrm{Gr}(d,n-1)$ are defined as follows:
\begin{equation*}
    G_{i_1\dots i_d}=\{L\in\mathrm{Gr}(d,n-1):\sigma(L)=i_1\dots i_d\},
\end{equation*}
where $\sigma(L)$ is the \defn{shape} of $L$, ie. the ordered $d$-tuple listing the columns in which the first non-zero terms appear in the reduced row-echelon basis matrix of $L$.

Notice that for any generic configuration $p\in(\mathbb{R}^d)^n$, a $d$-volume preserving affine transformation of $\mathbb{R}^d$ may be applied so that the resulting \defn{pinned configuration} $\overline{p}$ has \defn{configuration matrix}
\begin{equation*}
    C(\overline{p})=\begin{bmatrix}1&\dots&1\\ \overline{p}(1)&\dots&\overline{p}(n)\end{bmatrix}=\begin{bmatrix}1&\underline{1}^t&1&\dots&1\\ \underline{0}&I_d&\overline{p}(4)&\dots&\overline{p}(n)\end{bmatrix}\in\mathbb{R}^{(d+1)\times n}.
\end{equation*}
Then the submatrices of $C(\overline{p})$ defined by removing the first row and column, as $p$ varies in the open dense subset of generic points in $(\mathbb{R}^d)^n$ are precisely the basis matrices of real $d$-dimensional linear subspaces of $\mathbb{R}^{n-1}$ of shape $1\dots d$.

This describes the real case of a birational equivalence, which is defined formally as follows:

\begin{theorem}\label{thm:borceastreinu}(\cite{borcea2013realizations})
Let $\mathbb{P}(M_n^d)\subseteq\mathbb{CP}^{\binom{n}{d+1}-1}$ be the projectification of $CM_n^d$. Then $\mathbb{P}(M_n^d)$ is birationally equivalent to $\mathrm{Gr}(d,n-1)$ via the mutually-inverse rational maps
\begin{equation*}
\begin{split}
    \phi&:G_{1\dots d}\rightarrow V=\{\alpha(p):\alpha_n^d(p)_{1\dots (d+1)}\neq0\},\\
    \pi_{\binom{[n]\setminus\{1\}}{d}}&:V\rightarrow G_{1\dots d}.
\end{split}
\end{equation*}
\end{theorem}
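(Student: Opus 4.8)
The plan is to write down both rational maps explicitly, so that the claim reduces to a two‑line determinant manipulation, the only care needed being the bookkeeping of the passage between affine and projective varieties. The basic observation I would start from is that post‑composing a configuration $p$ with an affine map $x\mapsto Ax+b$ of $\mathbb{C}^d$ multiplies every coordinate of $\alpha_n^d$ by the \emph{single} nonzero scalar $\det A$, since for any $(d+1)$-tuple $i_1\dots i_{d+1}$,
\begin{equation*}
\begin{bmatrix}1&\dots&1\\ Ap(i_1)+b&\dots&Ap(i_{d+1})+b\end{bmatrix}=\begin{bmatrix}1&0\\ b&A\end{bmatrix}\begin{bmatrix}1&\dots&1\\ p(i_1)&\dots&p(i_{d+1})\end{bmatrix}.
\end{equation*}
Hence such maps act on $\mathbb{P}(M_n^d)$, and for generic $p$ one may replace $p$ by its pinned form $\overline p$ with configuration matrix $C(\overline p)$ as in the displayed normal form ($\overline p(1)=\underline 0$, and $\overline p(j+1)$ the $j$-th column of $[\,I_d\mid\overline p(d+2)\ \dots\ \overline p(n)\,]$) without changing the class $[\alpha_n^d(p)]$; this is exactly the normalisation recorded in the paragraph preceding the theorem.

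First I would identify the map $\pi_{\binom{[n]\setminus\{1\}}{d}}$. A $(d+1)$-subset of $[n]$ containing $1$ is the same datum as the complementary $d$-subset $\{j_1<\dots<j_d\}$ of $\{2,\dots,n\}$, and expanding the defining determinant along its first row gives
\begin{equation*}
\alpha_n^d(\overline p)_{1\,j_1\dots j_d}=\begin{vmatrix}1&1&\dots&1\\ \underline 0&\overline p(j_1)&\dots&\overline p(j_d)\end{vmatrix}=\det\big[\,\overline p(j_1)\ \dots\ \overline p(j_d)\,\big],
\end{equation*}
the $\{j_1,\dots,j_d\}$-th $d\times d$ minor of $\big[\,\overline p(2)\ \dots\ \overline p(n)\,\big]=[\,I_d\mid \overline p(d+2)\ \dots\ \overline p(n)\,]$. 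So the subvector of $\alpha_n^d(\overline p)$ carried by the indices containing $1$ is precisely the Plücker coordinate vector of $L_{\overline p}:=\rowspace[\,\overline p(2)\ \dots\ \overline p(n)\,]\in\mathrm{Gr}(d,n-1)$, which lies in the chart $G_{1\dots d}$ exactly because its $1\dots d$ Plücker coordinate equals $\alpha_n^d(p)_{1\dots(d+1)}\neq 0$. Since ``being a vector of $d\times d$ minors of a single $d\times(n-1)$ matrix'' is the Zariski‑closed condition of satisfying the Plücker relations, this identification extends from $V$ to a rational map $\pi_{\binom{[n]\setminus\{1\}}{d}}:\mathbb{P}(M_n^d)\dashrightarrow\mathrm{Gr}(d,n-1)$ restricting to a morphism $V\to G_{1\dots d}$. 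Dually, $\phi$ sends $L\in G_{1\dots d}$ with reduced basis matrix $[\,I_d\mid B\,]$ to $[\alpha_n^d(\overline p_L)]$, where $\overline p_L(1)=\underline 0$ and $\overline p_L(j+1)$ is the $j$-th column of $[\,I_d\mid B\,]$; this is a polynomial map landing in $V\subseteq\mathbb{P}(M_n^d)$ by definition of the measurement variety.

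Then I would check that $\pi\circ\phi=\mathrm{id}_{G_{1\dots d}}$ and $\phi\circ\pi=\mathrm{id}_V$, both of which collapse to the single remark that the matrix $[\,\overline p(2)\ \dots\ \overline p(n)\,]$ attached to a pinned configuration is already in reduced row‑echelon form of shape $1\dots d$: hence $L_{\overline p_L}=L$ (giving $\pi\circ\phi=\mathrm{id}$), and the configuration reconstructed from $L_{\overline p}$ is $\overline p$ itself, giving $\phi(\pi([\alpha_n^d(p)]))=[\alpha_n^d(\overline p)]=[\alpha_n^d(p)]$. Finally $V$ is a nonempty (take $p(1),\dots,p(d+1)$ affinely independent, possible as $n\geq d+1$) Zariski‑open subset of the irreducible variety $\mathbb{P}(M_n^d)$, hence dense, and $G_{1\dots d}$ is dense in $\mathrm{Gr}(d,n-1)$; so $\phi$ and $\pi_{\binom{[n]\setminus\{1\}}{d}}$ are inverse isomorphisms of dense open subsets, i.e.\ a birational equivalence.

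I expect the only genuine obstacle to be organisational: keeping the objects $CM_n^d$, $M_n^d$, $\mathbb{P}(M_n^d)$ and the affine chart $G_{1\dots d}$ of the projective Grassmannian consistently aligned, and tracking the scalar introduced by pinning so that ``$\phi$ and $\pi$ are inverse'' is asserted at the level of projective classes; relatedly, one must be careful with the orientation/ordering conventions on the index tuples when invoking Laplace expansion. Conceptually, though, the equivalence is nothing more than the observation that, once vertex $1$ sits at the origin, every enclosed $d$-volume is a signed sum of the $d+1$ enclosed $d$-volumes that involve vertex $1$ (Laplace expansion along the top row of $C(\overline p)$), so those already determine the framework up to trivial motions.
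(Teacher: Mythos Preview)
The paper does not give its own proof of this theorem: it is quoted from \citet{borcea2013realizations}, and the only thing the paper adds is the paragraph immediately after the statement describing $\phi$ concretely as the linear map represented by $\Phi=(D^d)^{\binom{[n]\setminus\{1\}}{d}}$, the restriction of the $d$-coboundary matrix to the columns indexed by $(d-1)$-simplices not containing vertex~$1$. So there is no in-paper argument to compare against, only that description of $\phi$.

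Your sketch is essentially correct and is the standard way to see the result: pin so that vertex~$1$ sits at the origin and vertices $2,\dots,d{+}1$ at the standard basis, observe that the coordinates of $\alpha_n^d(\overline p)$ indexed by simplices containing~$1$ are exactly the Pl\"ucker coordinates of the row space of $[\,\overline p(2)\ \dots\ \overline p(n)\,]$, and recover the remaining coordinates by Laplace expansion along the top row of $C(\overline p)$. The one place your write-up diverges from the paper is in how $\phi$ is presented. You define $\phi(L)$ by reading off the reduced basis matrix $[\,I_d\mid B\,]$, building the pinned configuration $\overline p_L$, and then measuring; the paper instead presents $\phi$ directly as a \emph{linear} map on Pl\"ucker coordinates via $\Phi$. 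These are the same map --- your final paragraph's Laplace-expansion remark is exactly the computation showing that $\alpha_n^d(\overline p_L)_{i_1\dots i_{d+1}}$ is the signed sum $\sum_k(-1)^{k}\,\mathrm{Pl}(L)_{\{i_1,\dots,i_{d+1}\}\setminus\{i_k\}}$, which is the $(i_1\dots i_{d+1})$-row of $\Phi$ applied to $\mathrm{Pl}(L)$ --- but since the paper's subsequent Proposition and Corollary~\ref{cor:acyclic} are phrased in terms of $\Phi$ and its row-dependencies, it would be worth making that identification explicit rather than leaving it as an aside. Otherwise your density and inverse checks are fine; the only bookkeeping to watch, as you note yourself, is the sign convention when passing between $(d{+}1)$-subsets of $[n]$ containing~$1$ and $d$-subsets of $[n]\setminus\{1\}$.
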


The map $\phi$ is the linear map described pre-composing the $d$-coboundary map on the complete $d$-dimensional simplicial complex, $\delta^d$, represented by the $\binom{[n]}{d+1}\times\binom{[n]}{d}$ matrix of $0$s, $1$s and $-1$s, $D^d$, with the projection onto those entries indexed by $(d-1)$-simplices not containing vertex 1.
Represent $\phi$ by the matrix $\Phi$, where $\Phi=(D^d)^{\binom{[n]\setminus\{1\}}{d}}$.

\begin{proposition}
The set $S\subseteq\binom{[n]}{d+1}$ is independent in $\mathcal{R}_n^d$ if and only if the set $T\subseteq\binom{[n]\setminus\{1\}}{d}$ is independent in $\mathcal{M}(G_{1\dots d})$, where $T$ indexes a set of columns forming a basis of $\colspace(\Phi_S)$. 
\end{proposition}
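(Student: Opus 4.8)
The plan is to transport the algebraic-matroid description of $\mathcal{R}_n^d$ given by Theorem~\ref{thm:matroids} across the birational equivalence of Theorem~\ref{thm:borceastreinu}, tracking the linear map $\Phi$ throughout. By Theorem~\ref{thm:matroids}, $S$ is independent in $\mathcal{R}_n^d$ exactly when $\overline{\pi_S(CM_n^d)}=\mathbb{C}^S$. Since $\alpha_n^d$ is homogeneous of degree $d$, the variety $CM_n^d$ is a cone, and since $\phi$ is linear the birational equivalence lifts to a dominant \emph{linear} map: writing $\widehat{G}$ for the affine cone over $\mathrm{Gr}(d,n-1)$ restricted to the chart $G_{1\dots d}$, one has $\overline{\Phi(\widehat{G})}=CM_n^d$ (the relevant dimensions agree, $\dim\widehat{G}=d(n-1-d)+1=\dim CM_n^d$). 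Composing with the coordinate projection then gives $\overline{\pi_S(CM_n^d)}=\overline{(\pi_S\circ\Phi)(\widehat{G})}=\overline{\Phi_S(\widehat{G})}$, where $\Phi_S$ is the restriction of $\Phi$ to the rows indexed by $S$.

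Next I would exploit that $\Phi_S$ is linear. We have $\overline{\Phi_S(\widehat{G})}\subseteq\colspace(\Phi_S)$, so $S$ is independent if and only if both $\colspace(\Phi_S)=\mathbb{C}^S$ (equivalently $\rank\Phi_S=|S|$, whence $|T|=|S|$) and $\Phi_S$ restricted to $\widehat{G}$ is dominant onto $\colspace(\Phi_S)$. Choosing $T$ so that the columns of $\Phi_S$ indexed by $T$ form a basis of $\colspace(\Phi_S)$, those columns give an isomorphism $B\colon\mathbb{C}^T\to\colspace(\Phi_S)$, and factoring $\Phi_S=B\circ\rho$ produces a surjection $\rho\colon\mathbb{C}^F\to\mathbb{C}^T$ ($F=\binom{[n]\setminus\{1\}}{d}$) whose $K$-th coordinate, for $K\in T$, is the coordinate $y_K$ plus a linear combination of the coordinates $y_J$ with $J\in F\setminus T$. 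The remaining condition becomes $\overline{\rho(\widehat{G})}=\mathbb{C}^T$.

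It then remains to identify $\overline{\rho(\widehat{G})}=\mathbb{C}^T$ with the independence of $T$ in $\mathcal{M}(G_{1\dots d})$. For the version with the coordinate projection $\pi_T$ in place of $\rho$ this is precisely the definition of independence in the algebraic matroid, so the content is that the ``off-$T$'' corrections in $\rho$ are invisible to $\mathcal{M}(G_{1\dots d})$. Here I would use the explicit inverse $\pi_{\binom{[n]\setminus\{1\}}{d}}$ of $\phi$: it identifies the coordinates of $CM_n^d$ indexed by $(d+1)$-simplices through vertex $1$ with the Plücker coordinates of $\mathrm{Gr}(d,n-1)$, while the $d$-coboundary structure of $\Phi$ expresses every remaining coordinate of $CM_n^d$ as an explicit signed sum of Plücker coordinates; evaluating at a generic point of $\widehat{G}$ one checks that the Jacobians of $(\rho(y)_K)_{K\in T}$ and of $(y_K)_{K\in T}$ have equal rank there.

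I expect this last identification to be the main obstacle. The delicate point is that $T$ is chosen among the \emph{columns} of $\Phi_S$ --- i.e.\ as a transversal of $\colspace(\Phi_S)$ --- rather than canonically, so $\rho$ is pinned down only up to the triangular shear described above, and such shears can change an algebraic matroid in general; showing they cannot do so here is exactly where the combinatorics of the coboundary $\Phi$ and the geometry of the Grassmannian chart must be used together.
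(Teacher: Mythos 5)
Your plan follows the paper's own route: Theorem~\ref{thm:matroids} turns independence of $S$ into $\overline{\pi_S(CM_n^d)}=\mathbb{C}^S$, and Theorem~\ref{thm:borceastreinu} replaces that projection by the image of the Grassmannian under the linear map $\Phi_S$; your cone lift $\widehat{G}$ and the explicit requirement $\rank\Phi_S=|S|$ are in fact more careful than the paper, which works with the chart and leaves the rank condition out of the statement. But the step you defer --- that dominance of the sheared projection $\rho=\pi_T+C\pi_{F\setminus T}$ onto $\mathbb{C}^T$ is equivalent to dominance of the coordinate projection $\pi_T$, i.e.\ to independence of $T$ in $\mathcal{M}(G_{1\dots d})$ --- is exactly where the paper's proof also stops: it is asserted there in the single phrase ``ie.\ $\Phi_S^T$ being full-rank and $T$ being independent'', with no supporting argument. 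So there is no hidden lemma in the paper for you to import, and your proposal is missing its central step.

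Moreover, your suspicion that the shear is not innocuous is correct, and the missing step cannot be established for an arbitrary column basis $T$. Take $d=2$, $n=6$ and $S=\{234,235,236,245,246,256\}$. Each row of $\Phi_S$ contains a private unit vector ($e_{34},e_{35},e_{36},e_{45},e_{46},e_{56}$ respectively), so $\rank\Phi_S=6=|S|$; yet $S$ is dependent in $\mathcal{R}_6^2$, because these rows of $R(p)$ are supported on the vertices $\{2,\dots,6\}$ only and so lie in the row space of the rigidity matrix of $K_5^2$, which has rank $2\cdot5-5=5<6$. Nevertheless $T=\{24,34,35,36,45,56\}$ is a basis of $\colspace(\Phi_S)$ (the columns $34,35,36,45,56$ are $e_1,e_2,e_3,e_4,e_6$ and the column of $24$ supplies $e_5$), and $T$ is independent in $\mathcal{M}(G_{12})$: on the chart with rows $u=(1,0,a,b,c)$, $v=(0,1,d,e,f)$ the corresponding Pl\"ucker coordinates are $d,\,-a,\,-b,\,-c,\,ae-bd,\,bf-ce$, which are algebraically independent. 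So with this choice of $T$ the right-hand side of the Proposition holds while the left-hand side fails; by contrast the lexicographically greedy column basis here is $\{23,24,25,34,35,45\}$, which is dependent (it carries a Pl\"ucker relation, and $23$ is constant on the chart), restoring consistency. The upshot is that the equivalence you are asked to prove is sensitive to the choice of transversal, so your concluding ``check that the two Jacobians have equal rank at a generic point'' cannot go through for an arbitrary $T$: any correct completion (and arguably the statement itself) must fix a particular choice of column basis, such as the greedy one used in the paper's Example~\ref{ex:matroidindep} and in the shifting algorithm, and use that choice in the argument.
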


\begin{proof}
The set $S\subseteq\binom{[n]}{d+1}$ is independent in $\mathcal{R}_n^d$ if and only if $\overline{\pi_S(V)}=\mathbb{C}^S$.
This projection is equivalent to the image of $G_{1\dots d}$ under $\Phi_S$ being full-dimensional, ie. $\Phi_S^T$ being full-rank and $T$ being independent in $\mathcal{M}(G_{1\dots d})$.
\end{proof}

\begin{corollary}\label{cor:acyclic}
If the $d$-dimensional simplicial complex $\Sigma$ is independent in $\mathcal{R}_n^d$, then $\Sigma$ is a-$d$-cyclic (ie. $\beta(\Sigma)_d=0$).
\end{corollary}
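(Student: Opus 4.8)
The plan is to prove the contrapositive: if $\beta(\Sigma)_d\neq 0$, then $\Sigma_d$ is dependent in $\mathcal R_n^d$. The underlying idea is that a nonzero top-dimensional simplicial cycle of $\Sigma$, read as a vector indexed by $\Sigma_d$, is simultaneously a nonzero element of the left kernel of $\Phi_{\Sigma_d}$, and such a vector is precisely what prevents $\overline{\pi_{\Sigma_d}(V)}$ from being all of $\mathbb C^{\Sigma_d}$.

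First I would set up the homological bookkeeping. Since $\Sigma$ is $d$-dimensional there are no $(d+1)$-chains, so $\beta(\Sigma)_d=\dim_{\mathbb C} Z_d(\Sigma)$ with $Z_d(\Sigma)=\ker\bigl(\partial_d\colon C_d(\Sigma)\to C_{d-1}(\Sigma)\bigr)$; using the fixed orderings of the simplices I regard $C_d(\Sigma)$ as the coordinate subspace $\mathbb C^{\Sigma_d}\subseteq\mathbb C^{\binom{[n]}{d+1}}$. Choose a nonzero $z\in Z_d(\Sigma)$ and extend it by zero. Because $\Sigma$ is a subcomplex of $K_n^d$, the boundary operator of $\Sigma$ is the restriction of that of $K_n^d$, so $z$ is also a $d$-cycle of $K_n^d$. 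With the orientation conventions fixed above, the matrix $D^d$ representing $\delta^{d-1}$ is the transpose of the matrix of $\partial_d$, so the space of $d$-cycles of $K_n^d$ is exactly the left null space of $D^d$; since $\Phi=(D^d)^{\binom{[n]\setminus\{1\}}{d}}$ is obtained from $D^d$ by deleting columns, this forces $z^{T}\Phi=0$.

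It remains to convert $z^{T}\Phi=0$ into a failure of density. From $z^{T}\Phi=0$ the linear form $\ell_z(w)=\langle z,w\rangle$ vanishes on $\colspace(\Phi)$, hence on $V=\phi(G_{1\dots d})\subseteq\colspace(\Phi)$ (using Theorem \ref{thm:borceastreinu}, which identifies $\phi$ with the restriction of $\Phi$). As $z$ is supported on $\Sigma_d$, $\ell_z$ involves only the coordinates indexed by $\Sigma_d$, so it descends to a nonzero linear form $\overline{\ell_z}$ on $\mathbb C^{\Sigma_d}$ with $\overline{\ell_z}\circ\pi_{\Sigma_d}=\ell_z$. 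Therefore $\pi_{\Sigma_d}(V)\subseteq\mathbb V(\overline{\ell_z})\subsetneq\mathbb C^{\Sigma_d}$, so $\overline{\pi_{\Sigma_d}(V)}\neq\mathbb C^{\Sigma_d}$, which by the characterisation of independence recorded in the proof of the preceding Proposition means $\Sigma_d$ is dependent in $\mathcal R_n^d$ — the contrapositive of the corollary.

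The only delicate point is the middle paragraph: pinning down that the $d$-cycles of $K_n^d$ are exactly the left null space of $D^d$, with the transpose and sign conventions correct, and that a cycle of the subcomplex $\Sigma$ genuinely extends to a cycle of $K_n^d$. Once $z^{T}\Phi=0$ is in hand, everything else is immediate. (One could run the same argument directly on the rigidity matrix: every entry of $R(p)$ in the column block of a vertex $v$ of a $d$-simplex $\sigma$ is, up to an incidence sign, a cofactor depending only on the facet $\sigma\setminus v$, which yields a factorisation $R(p)=D^{d}\!\cdot\!A(p)$ and hence $Z_d(\Sigma)\subseteq\ker\bigl(R(p)_{\Sigma_d}^{T}\bigr)$ for every configuration $p$, so $\rank R(p)_{\Sigma_d}\le|\Sigma_d|-\beta(\Sigma)_d$; but the $\Phi$-formulation above is shorter given the machinery already developed.)
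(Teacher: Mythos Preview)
Your proof is correct and follows essentially the same approach as the paper: a nonzero $d$-cycle of $\Sigma$ gives a left-kernel vector of $\Phi_{\Sigma_d}$, which forces $\Sigma_d$ to be dependent in $\mathcal{R}_n^d$. The only presentational difference is that the paper phrases the final step as $\rank(\Phi_{\Sigma_d})<|\Sigma_d|$ and then invokes the preceding Proposition via the Grassmannian matroid, whereas you phrase it as $\overline{\pi_{\Sigma_d}(V)}\subsetneq\mathbb{C}^{\Sigma_d}$; these are two ways of reading the same rank deficiency.
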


\begin{proof}
Suppose that $\beta(\Sigma)_d>0$ and fix a $d$-cycle, supported by the maximal simplices of the $d$-dimensional simplicial sub-complex $\Gamma$ of $\Sigma$.

Since $\Phi_{\Sigma_d}$ is the restriction of the matrix $D^d(\Sigma)$ to the columns indexed by $\Sigma_{d-1}\setminus\link(1)$, if $\Gamma$ is a $d$-cycle, then it induces a row-dependency on $D^d(\Sigma)$ and hence on $D^d(\Sigma)^{\Sigma_{d-1}\setminus\link(1)}=\Phi_{\Sigma_d}$.

If $\Phi$ admits a row-dependency, then $\rank(\Phi)<f(\Sigma)_d$ and hence the rank of the corresponding set in $\mathcal{M}(\mathrm{Gr}(d,n-1))$ will be deficient and so it will be dependent in $\mathcal{M}(\mathrm{Gr}(d,n-1))$, therefore $\Sigma$ will be dependent in $\mathcal{R}_n^d$.
\end{proof}

By this characterisation of $\mathcal{R}_n^d$, we may characterise $d$-volume rigidity when $d\in\{1,2\}$ in terms of the combinatorics of simplicial complexes as follows:

\begin{theorem}\label{thm:combchar}
Let $\Sigma$ be a $d$-dimensional simplicial complex on $n$ vertices,
\begin{itemize}
    \item If $d=1$, $\Sigma$ is rigid if and only if $\Sigma$ is connected.
    \item If $d=2$, $\Sigma$ is rigid if and only if $\Sigma$ admits a spanning 2-dimensional sub-complex $\Sigma'$ so that the indexing set $S$ of a column basis of $\colspace(\Phi_{\Sigma'_2})$ has size $2n-5$ and the graph $([n]\setminus\{1\},S)$ admits an acyclic, ACT-free orientation.
\end{itemize}
\end{theorem}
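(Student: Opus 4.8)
The plan is to prove the two cases by reducing each to the combinatorial characterisation of the algebraic matroid $\mathcal{M}(\mathrm{Gr}(d,n-1))$ via Theorem~\ref{thm:borceastreinu} and the Proposition relating $\mathcal{R}_n^d$ to $\mathcal{M}(G_{1\dots d})$. By Proposition~\ref{prop:basis}, $\Sigma$ is $d$-volume rigid if and only if $\Sigma_d$ contains a basis of $\mathcal{R}_n^d$; equivalently, $\Sigma$ has a spanning pure $d$-dimensional subcomplex $\Sigma'$ with $f(\Sigma')_d = dn-(d^2+d-1)$ whose $d$-simplices are independent in $\mathcal{R}_n^d$. By the Proposition and Theorem~\ref{thm:matroids}, this independence is equivalent to a column basis $S$ of $\colspace(\Phi_{\Sigma'_d})$ being independent in $\mathcal{M}(\mathrm{Gr}(d,n-1))$, and the rank count $dn-(d^2+d-1)$ forces $|S| = \binom{n-1}{d}$-type bound; one checks $dn-(d^2+d-1)$ equals the rank of $\mathcal{R}_n^d$, which equals the rank of $\mathcal{M}(\mathrm{Gr}(d,n-1))$, so $S$ must in fact be a \emph{basis} of $\mathcal{M}(\mathrm{Gr}(d,n-1))$ living inside $\binom{[n]\setminus\{1\}}{d}$.

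For $d=1$: here $\mathrm{Gr}(1,n-1)$ is just $\mathbb{CP}^{n-2}$, and $\mathcal{M}(\mathrm{Gr}(1,n-1))$ is the uniform matroid $U_{n-1,n-1}$ on the ground set $[n]\setminus\{1\}$ (all of $\binom{[n]\setminus\{1\}}{1}$). The matrix $\Phi_{\Sigma'_1}$ is (up to the pinning at vertex $1$) the signed incidence matrix of the graph $\Sigma'$, whose column space is the cut space; a column basis corresponds to a spanning tree. So $\Sigma_1$ contains a basis of $\mathcal{R}_n^1$ iff $\Sigma'$ contains a spanning tree iff $\Sigma$ is connected. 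I would spell this out by noting $\colspace(\Phi_{\Sigma'_1}) = \mathbb{C}^{n-1}$ precisely when the graph $([n]\setminus\{1\},\,\text{vertices of }\Sigma'\text{ joined to }1\text{ collapsed})$ is connected, which is equivalent to connectivity of $\Sigma'$; Corollary~\ref{cor:acyclic} handles the a-cyclic direction automatically. This case is short and essentially a restatement of the classical fact that the graphic matroid's bases are spanning trees.

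For $d=2$: the content is that $\mathcal{M}(\mathrm{Gr}(2,n-1))$ has been combinatorially characterised — its independent sets $T\subseteq\binom{[n]\setminus\{1\}}{2}$ are exactly the edge sets of graphs on $[n]\setminus\{1\}$ admitting an acyclic, ACT-free orientation (this is the result of Bernstein--Sims et al.\ acknowledged in the introduction), and its bases have size $2n-5$. Plugging this into the Proposition: $\Sigma_2$ contains a basis of $\mathcal{R}_n^2$ iff $\Sigma$ has a spanning subcomplex $\Sigma'$ with $f(\Sigma')_2 = 2n-5$ such that a column basis $S$ of $\colspace(\Phi_{\Sigma'_2})$ is a basis of $\mathcal{M}(\mathrm{Gr}(2,n-1))$, i.e.\ $|S| = 2n-5$ and $([n]\setminus\{1\},S)$ has an acyclic ACT-free orientation. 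I would need to verify that $\colspace(\Phi_{\Sigma'_2})$ having a column basis of the stated size is exactly the condition $\rank(\Phi_{\Sigma'_2}) = 2n-5$, matching $\rank(R(p)_{\Sigma'_2}) = 2n-5 = dn-(d^2+d-1)$ with $d=2$ — this follows from the birational equivalence in Theorem~\ref{thm:borceastreinu} and the rank identification in the proof of Theorem~\ref{thm:matroids}.

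The main obstacle is bookkeeping rather than a new idea: I must be careful that ``$\Sigma'_2$ independent in $\mathcal{R}_n^2$ of the right size'' translates correctly through the non-injective map $\Phi$ (the column-basis set $S$ depends on choices, but its matroid type in $\mathcal{M}(\mathrm{Gr}(2,n-1))$ does not, by the Proposition), and that the count $2n-5$ on the simplex side matches the count $2n-5$ on the $S$ side — a priori $f(\Sigma'_2)$ and $|S|$ could differ, but independence of $\Sigma'_2$ together with the rank formula forces $\Phi_{\Sigma'_2}$ to have full column rank equal to its number of columns, so $|S| = f(\Sigma'_2)$, and Corollary~\ref{cor:acyclic} plus the rank of the ambient matroid pins everything down. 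I would also remark that the $d=1$ statement is a degenerate instance of the $d=2$ pattern (acyclic orientations with no constraints), unifying the two bullets.
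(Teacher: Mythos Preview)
Your proposal is correct and follows essentially the same route as the paper: reduce rigidity to containing a basis of $\mathcal{R}_n^d$, transfer to $\mathcal{M}(\mathrm{Gr}(d,n-1))$ via Theorem~\ref{thm:borceastreinu} and the Proposition, then invoke the folklore graphic-matroid fact for $d=1$ and Bernstein's Theorem~\ref{thm:bernstein} for $d=2$. Two minor slips to clean up: in the $d=1$ discussion a column basis of $\Phi_{\Sigma'_1}$ is a set of \emph{vertices} in $[n]\setminus\{1\}$ (not a spanning tree), and in the $d=2$ bookkeeping it is full \emph{row} rank of $\Phi_{\Sigma'_2}$ (the matrix has $\binom{n-1}{2}$ columns, not $2n-5$) that yields $|S|=f(\Sigma')_2$; neither affects the argument.
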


The $d=1$ case is folkloric and follows from the same argument as the Euclidean graph rigidity case (see \citet{graver1993combinatorial}), since the signed 1-volume of a 1-simplex in $\mathbb{R}^1$ is precisely its signed length.
The $d=2$ case utilises Bernstein's combinatorial characterisation of independence in the algebraic matroid of the $(2,N)$-Grassmannian.

\begin{theorem}(\cite{bernstein2017completion})\label{thm:bernstein}
The set $E\subseteq\binom{[N]}{2}$ is independent in the matroid $\mathcal{M}(\mathrm{Gr}(2,N))$ if and only if $E$ admits an acyclic graph orientation that is ACT-free.
\end{theorem}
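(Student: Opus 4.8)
The plan is to identify the ground set $\binom{[N]}{2}$ with the Plücker coordinates $p_{ij}$ of a generic $2\times N$ matrix, viewed as edges of a graph on $[N]$, and to compute $\mathcal{M}(\mathrm{Gr}(2,N))$ through the tropicalisation of the Grassmannian. The guiding principle is that for an irreducible variety the rank function of its algebraic matroid is read off from its tropical variety, $\rank(E)=\dim\pi_E(\mathrm{trop}\,\mathrm{Gr}(2,N))$, so that $E$ is independent precisely when the coordinate projection of $\mathrm{trop}\,\mathrm{Gr}(2,N)$ onto the $E$-coordinates is full-dimensional. I would first invoke the Speyer--Sturmfels description of $\mathrm{trop}\,\mathrm{Gr}(2,N)$ as the space of phylogenetic trees on $N$ leaves: modulo the lineality coming from the torus, a tropical Plücker vector is a tree metric, and the coordinate indexed by $ij$ is the leaf-to-leaf distance $d_T(i,j)=\sum_{e\in P_T(i,j)}\ell_e$, which on a fixed trivalent topology $T$ is a linear function of the edge-length parameters $\ell_e$.

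Second, I would reduce to a statement about paths in trees. Since $\mathrm{trop}\,\mathrm{Gr}(2,N)$ is a fan whose maximal cones are indexed by trivalent trees, its projection to the $E$-coordinates is full-dimensional if and only if it is so on some maximal cone; that is, if and only if there is a trivalent tree $T$ on leaf set $[N]$ for which the linear map $\ell\mapsto(d_T(i,j))_{ij\in E}$ is surjective. Writing each path as its edge-incidence vector $\chi_{P_T(ij)}\in\mathbb{R}^{E(T)}$, surjectivity is the same as linear independence of the family $\{\chi_{P_T(ij)}:ij\in E\}$. The problem thus becomes purely combinatorial: $E$ is independent in $\mathcal{M}(\mathrm{Gr}(2,N))$ if and only if some trivalent tree $T$ realises the edges of $E$ as linearly independent leaf-to-leaf paths.

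Third, I would prove the equivalence of this tree condition with the existence of an acyclic, ACT-free orientation of $E$. For the forward direction, a tree $T$ with independent paths can be rooted, and the induced partial order orients each $ij\in E$ along its path; a directed cycle in this orientation, or an alternating configuration (the ACT obstruction), would produce an explicit linear relation among the $\chi_{P_T(ij)}$ --- the prototype being the four-point relation $\chi_{P_T(ik)}+\chi_{P_T(jl)}=\chi_{P_T(il)}+\chi_{P_T(jk)}$ forced on any quartet of leaves, which lives on a $K_{2,2}$ and is exactly the tropicalised Plücker relation --- contradicting independence. Conversely, from an acyclic ACT-free orientation I would construct a trivalent tree witnessing independence, inducting on $N$ by removing a source (or sink) together with the cherry of the tree it determines and checking that independence is preserved.

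The main obstacle is this final combinatorial equivalence: one must pin down the circuits of the tree-path matroid and show they are precisely the two forbidden oriented families, directed cycles and ACTs. A secondary point is justifying that the tropical computation genuinely returns the algebraic matroid --- that full-dimensionality of the tropical projection lifts to algebraic independence of the Plücker coordinates. This holds because $\mathrm{Gr}(2,N)$ is irreducible and defined over $\mathbb{Q}$, so a generic interior point of the chosen maximal cone lifts, through a generic choice of edge lengths and the valuation, to a point of the Grassmannian at which the Jacobian of the selected Plücker coordinates attains full rank.
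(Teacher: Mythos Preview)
The paper does not prove this statement; Theorem~\ref{thm:bernstein} is quoted from \citet{bernstein2017completion} as an external result and invoked without argument in the proof of Theorem~\ref{thm:combchar}. There is thus no in-paper proof to compare your proposal against.

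Your outline is nonetheless close in spirit to Bernstein's own argument in the cited reference: pass to $\mathrm{trop}\,\mathrm{Gr}(2,N)$, use the Speyer--Sturmfels identification with the space of tree metrics, and reduce algebraic independence of a set of Pl\"ucker coordinates to linear independence of leaf-to-leaf path-incidence vectors in some trivalent tree. The link between the algebraic matroid and the tropical projection is delicate---tropicalisation does not in general commute with coordinate projection---but your last paragraph names the correct workaround. The part that is genuinely only sketched is your step four, the combinatorial equivalence between ``some trivalent tree realises $E$ as independent paths'' and ``$E$ admits an acyclic ACT-free orientation.'' Rooting a tree does not by itself orient the elements of $E$ (leaves of a rooted tree are pairwise incomparable, and each $ij\in E$ corresponds to a path, not a single tree edge), and the claim that a directed cycle or an ACT forces a linear dependence among the $\chi_{P_T(ij)}$ beyond the four-point base case needs an explicit argument, as does the converse construction of a witnessing tree from a given orientation. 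These combinatorial lemmas are precisely where the work in \citet{bernstein2017completion} lies, so your plan is on target but the hard part remains to be filled in.
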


Here an acyclic graph orientation corresponds to a (re-)labelling of the vertices of a graph $G=(V,E)$, and an ACT is a cycle of $G$ as a directed graph ($E\subseteq V\times V$), $i_1,i_2,\dots, i_{2k}i_1$, so that $(i_1,i_2),(i_1,i_{2k}),(i_3,i_2),(i_3,i_4),\dots, (i_{2k-1},i_{2k})\in E$.

The test for independence given in \citet{bernstein2017completion} is known to be in RP. Attempts so far to find an algorithm in P have been unsuccessful.
However, implicit in the combinatorial characterisation defined here is another RP test for independence in $\mathcal{M}(\mathrm{Gr}(2,n-1))$: adjoin vertex 1 to each element of $S\subseteq\binom{[n]\setminus\{1\}}{2}$ to get $\Sigma_2$, then $S$ is independent if the set of rows of $R(p)$ indexed by $\Sigma_2$, where $p\in(\mathbb{R}^2)^n$ is a randomly chosen vector, is linearly independent.

In the proof of Theorem \ref{thm:combchar}, we determine what is required to be a basis in $\mathcal{R}_n^d$, noting that all rigid simplicial complexes' vertices are spanned by the $d$-simplices contained in some basis.

\begin{proof}[Proof of Theorem \ref{thm:combchar}]
\begin{itemize}
    \item Let $d=1$.
    Then $\Sigma_1\subseteq\binom{[n]}{2}$ is a basis in $\mathcal{R}_n^1$ if and only if $|\Sigma_1|=n-1$.
    By the pureness of $\Sigma$, this is equivalent to $\Sigma_1$ being a tree  graph (or a connected and a-1-cyclic 1-dimensional simplicial complex).
    \item Let $d=2$.
    By definition, $\Sigma$ is rigid if and only if $\Sigma_2$ admits a spanning basis of $\mathcal{R}_n^2$, $\Sigma'_2$.
    By Theorem \ref{thm:borceastreinu}, $\Sigma'_2$ corresponds to the basis $S$ of $\mathcal{M}(\mathrm{Gr}(2,n-1))$, an independent set in $\mathcal{M}(\mathrm{Gr}(2,n-1))$ of size $2n-5$.
    By Theorem \ref{thm:bernstein} $S$ is independent if and only if, $([n]\setminus\{1\},S)$ admits an acyclic, ACT-free orientation.
\end{itemize}
\end{proof}

We end with a couple of examples:

\begin{example}\label{ex:matroidindep}
Let $n=5$ and $d=2$.
Then $G_{12}$ is the affine chart of $\mathrm{Gr}(2,4)$ parametrised by $2\times 4$ matrices of the form
\begin{equation*}
    C=\begin{bmatrix}1&0&a&b\\0&1&c&d\end{bmatrix}\in\mathbb{C}^{2\times4}.
\end{equation*}
The rational map $\phi:G_{12}\rightarrow V\subseteq CM_5^2$ is described by the matrix
\begin{equation*}
    \Phi=\begin{bmatrix}
    1&0&0&0&0&0\\
    0&1&0&0&0&0\\
    0&0&1&0&0&0\\
    0&0&0&1&0&0\\
    0&0&0&0&1&0\\
    0&0&0&0&0&1\\
    1&-1&0&1&0&0\\
    1&0&-1&0&1&0\\
    0&1&-1&0&0&1\\
    0&0&0&1&-1&1
    \end{bmatrix}
\end{equation*}
and, as a set,
\begin{equation*}
\begin{split}
    \Phi(G_{12})=\left\{\alpha_5^2\left(p)\right):C(\overline{p})=C\right\},
\end{split}
\end{equation*}
where $C$ varies over matrices as above.

Consider $\Sigma$ defined by its maximal simplices
\begin{equation*}
    \Sigma_2=\{123,124,125,134,135\}.
\end{equation*}
Then $\Phi_{\Sigma_2}$ has a column basis indexed by $S=\{23,24,25,34,35\}$.
The graph $([5]\setminus\{1\},S)$ admits an ACT-free orientation, as in Figure \ref{fig:matroidindep}.
Hence $S$ is independent in $\mathcal{M}(\mathrm{Gr}(2,4))$, and so $\Sigma_2$ is independent in $\mathcal{M}(CM_5^2)$.
Since, moreover, $|S|=5=2(5)-5$, so $S$ is a basis of $\mathcal{M}(\mathrm{Gr}(2,4))$.
Therefore, by Theorem \ref{thm:combchar}, $\Sigma$ is rigid.
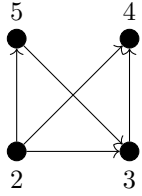
\begin{figure}\label{fig:matroidindep}
    \centering
    \begin{tikzpicture}
        \coordinate[Bullet=black, label=below:2] (n1) at (0,0);
            \coordinate[Bullet=black, label=below:3] (n2) at (1.5,0);
            \coordinate[Bullet=black, label=above:4] (n3) at (1.5,1.5);
            \coordinate[Bullet=black, label=above:5] (n4) at (0,1.5);
            \draw[->] (n1) to (n2);
            \draw[->] (n1) to (n3);
            \draw[->] (n1) to (n4);
            \draw[->] (n2) to (n3);
             \draw[->] (n4) to (n2);
    \end{tikzpicture}
    \caption{An acyclic, ACT-free orientation of $([5]\setminus\{1\},S)$.}
\end{figure}
\end{example}

\begin{example}\label{ex:matroiddep}
Consider the triangular bipyramid $\Sigma$, defined
\begin{equation*}
    \Sigma_2=\{123,124,134,235,245,345\}.
\end{equation*}
Then
\begin{equation*}
    \Phi_{\Sigma_2}=\begin{bmatrix}
        1&0&0&0&0&0\\
        0&1&0&0&0&0\\
        0&0&0&1&0&0\\
        1&0&-1&0&1&0\\
        0&1&-1&0&0&1\\
        0&0&0&1&-1&1
    \end{bmatrix},
\end{equation*}
but $\rank(\Phi_{\Sigma_2})=5<|\Sigma_2|$, therefore, $\Sigma_2$ will be dependent in $\mathcal{R}_5^2$.

Indeed, $\Sigma$ is a 2-cycle, so we have illustrated Corollary \ref{cor:acyclic}.
\end{example}

\section{Exterior Algebraic Shifting}\label{sec:eas}

A (not necessarily pure) $d$-dimensional simplicial complex $\Sigma$ on vertex set $[n]$ is \defn{shifted} with respect to some linear extension $<_{\ell}$ of the lexicographically induced partial order on $\binom{[n]}{d+1}$ if, for each $0\leq k\leq d$, if $i_1\dots i_{k+1}\in\Sigma_k$, then $j_1\dots j_{k+1}\in\Sigma_k$, for all $j_1\dots j_{k+1}<_{\ell} i_1\dots i_{k+1}$.

Given a linear extension $<_{\ell}$ of the partial ordering of $\binom{[n]}{d+1}$, exterior algebraic shifting is an algorithm ascribing to each $d$-dimensional simplicial complex $\Sigma$, its \defn{exterior algebraically shifted complex} (with respect to $<_{\ell}$) $\Delta(\Sigma):=\Delta^{\text{ext}}_{<_{\ell}}(\Sigma)$.
The steps of the algorithm as outlined in \citet{kalai2002algebraic} are as follows:
\begin{enumerate}
    \item Define an $n\times n$ matrix $X=(x_{i,j})_{i,j\in[n]}$ of indeterminates $x_{1,1},\dots,x_{n,n}$;
    \item For each $0\leq k\leq d$:
    \begin{enumerate}
        \item define the $k^{th}$ compound matrix
        \begin{equation*}
            X^{\wedge k}=\left(\begin{vmatrix}x_{i_1,j_1}&\dots& x{i_1,j_{k+1}}\\ \vdots&\ddots&\vdots\\ x_{i_{k+1},j_1}&\dots& x_{i_{k+1},j_{k+1}}\end{vmatrix}\right)_{i_1\dots i_{k+1},j_1\dots j_{k+1}\in\binom{[n]}{k+1}}
        \end{equation*}
        and let $X^{\wedge k}_{\Sigma_k}$ be the restriction of $X^{\wedge k}$ to the rows indexed by $\Sigma_k$;
        \item Choose a greedy basis with respect to $<_{\ell}$ from the columns of $X^{\wedge k}$ for $\colspace(X^{\wedge k}_{\Sigma_k})$ and define $\Delta(\Sigma)_k$ to be the set of indices of the columns chosen.
    \end{enumerate}
    \item Define $\Delta(\Sigma)$ to be the $d$-dimensional simplicial complex with $k$ simplices $\Delta(\Sigma)_k$, for all $0\leq k\leq d$.
\end{enumerate}

Exterior algebraic shifting has been applied to $d$-volume rigidity before:

\begin{theorem}\label{thm:bnp}(\cite{bulavka2022volume})
The $d$-dimensional simplicial complex $\Sigma$ is $d$-volume rigid if and only if there exists $<_{\ell}$ as above so that $134\dots(d+1)n\in\Delta^{\text{ext}}_{<_{\ell}}(\Sigma)_d$.
\end{theorem}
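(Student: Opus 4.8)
The plan is to reduce the statement to a rank condition and then run exterior shifting down to a purely combinatorial question. By Lemma~\ref{lem:rankofrigmatroid} and Proposition~\ref{prop:basis}, $\Sigma$ is $d$-volume rigid if and only if $\rank R(p)_{\Sigma_d}=dn-(d^2+d-1)$ for generic $p$, equivalently $\Sigma_d$ contains a basis of $\mathcal{R}_n^d$. I would then prove the theorem in two stages: \textbf{(I)} exterior algebraic shifting preserves $d$-volume rigidity, so that $\Sigma$ is rigid if and only if $\Delta^{\text{ext}}_{<_\ell}(\Sigma)$ is, for every $<_\ell$; and \textbf{(II)} a \emph{shifted} $d$-complex $\Gamma$ is $d$-volume rigid if and only if $134\dots(d+1)n\in\Gamma_d$.

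For Stage~(I): the matrices $X^{\wedge k}$ used to define $\Delta(\Sigma)$ are compound matrices of a generic $n\times n$ matrix, hence invertible, so $X^{\wedge k}_{\Sigma_k}$ has full row rank $|\Sigma_k|$; this is why the greedy step is well defined and why $\Delta$ preserves the $f$-vector. What needs proof is that the \emph{column} matroid of $X^{\wedge d}_{\Sigma_d}$ — the matroid whose lexicographically-first basis is, by definition, $\Delta(\Sigma)_d$ — carries enough of the rigidity matroid to detect rigidity. I would establish this through Theorem~\ref{thm:borceastreinu} and the description of $\mathcal{R}_n^d$ as the pullback along $\Phi=(D^d)^{\binom{[n]\setminus\{1\}}{d}}$ of $\mathcal{M}(\mathrm{Gr}(d,n-1))$: the Grassmannian matroid on Plücker coordinates is itself governed by minors of a generic matrix, i.e. by the same compound matrices, and tracing the two identifications should show that $\Delta$ commutes with the operation ``$\Sigma_d$ contains a basis of $\mathcal{R}_n^d$'', concretely that $\rank R(p)_{\Sigma_d}=\rank R(p)_{\Delta(\Sigma)_d}$. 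As a sanity check, minimally rigid complexes are independent in $\mathcal{R}_n^d$, hence a-$d$-cyclic by Corollary~\ref{cor:acyclic}, and exterior shifting preserves both the $f$-vector and the Betti numbers, so this is at least consistent.

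For Stage~(II): write $\Gamma$ for a complex shifted with respect to $<_\ell$, and let $\Lambda$ be the subcomplex whose $d$-simplices are all $(d+1)$-subsets $\{a_0<\dots<a_d\}$ componentwise below $\{1,3,4,\dots,d+1,n\}$, i.e. with $a_0=1$, $a_i\le i+2$ for $1\le i\le d-1$, and $a_d\le n$. Grouping these by the value $d$ or $d+1$ of the largest ``middle'' vertex $a_{d-1}$ gives $|\Lambda_d|=1\cdot(n-d)+(d-1)(n-d-1)=dn-(d^2+d-1)$, exactly the rank of $\mathcal{R}_n^d$. If $134\dots(d+1)n\in\Gamma_d$, shiftedness forces $\Lambda_d\subseteq\Gamma_d$, and it suffices to show $\Lambda$ is minimally rigid; I would do this by induction on $n$, observing that $\Lambda$ on $[n]$ is obtained from its analogue on $[n-1]$ by adjoining vertex $n$ inside the $d$ simplices $\{1\}\cup\bigl(\{2,\dots,d+1\}\setminus\{m\}\bigr)\cup\{n\}$ for $m\in\{2,\dots,d+1\}$ — a ``new vertex in $d$ generically-spanning $d$-simplices'' move that adds $d$ coordinates and $d$ infinitesimally independent rows to the rigidity matrix and hence takes a basis of $\mathcal{R}_{n-1}^d$ to a basis of $\mathcal{R}_n^d$ (the base case $n=d+2$ being that $\binom{[d+2]}{d+1}$ minus one facet is independent, since its only dependence is the boundary $d$-cycle, which uses every facet). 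Conversely, if $134\dots(d+1)n\notin\Gamma_d$, then $\Gamma_d$ is a $<_\ell$-order ideal avoiding it, so every $d$-simplex of $\Gamma$ through vertex $n$ is $<_\ell 134\dots(d+1)n$; I would then bound $\rank R(p)_{\Gamma_d}$ strictly below $dn-(d^2+d-1)$ — for instance by deleting vertex $n$ together with its link and showing the star of $n$ cannot contribute the full $d$ to the rank because, up to shiftedness, it is too small — producing a nontrivial infinitesimal flex. Combining Stages~(I) and~(II) yields the theorem, in fact for every linear extension $<_\ell$.

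The step I expect to be the main obstacle is Stage~(I): making precise the compatibility of exterior algebraic shifting with the $d$-volume rigidity matroid. The difficulty is bookkeeping — matching the rigidity matrix $R(p)$, whose rows are $d$-simplices and whose columns are vertex–coordinate pairs, with the square compound matrix $X^{\wedge d}$ on $d$-simplices, via the birational Grassmannian picture of Theorem~\ref{thm:borceastreinu}, all while tracking the role of the pinned vertex $1$ and the dimension shift between $d$-simplices and $(d-1)$-faces. Once that identification is secured, Stage~(II) is a self-contained argument about order ideals in the componentwise order on $\binom{[n]}{d+1}$, whose quantitative core is the counting identity $|\Lambda_d|=dn-(d^2+d-1)$.
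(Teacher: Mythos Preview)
The paper does not prove this theorem at all: it is quoted from \cite{bulavka2022volume} and used as a black box in the proof of Theorem~\ref{thm:bound}. So there is no ``paper's own proof'' to compare against, and your proposal is really an attempt to reconstruct the Bulavka--Nevo--Peled argument.

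Your two-stage architecture is the right one, and the forward direction of Stage~(II) is essentially what the paper itself carries out inside the proof of Theorem~\ref{thm:bound}: the down-set of $134\dots(d{+}1)n$ in the componentwise order has exactly $dn-(d^2+d-1)$ elements and is generically globally rigid (the paper calls it the LGRC). Your inductive $0$-extension argument for the rigidity of $\Lambda$ is fine.

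Where the proposal is thin is exactly where you flag it. In Stage~(I) you want to show that the column matroid of $X^{\wedge d}_{\Sigma_d}$ ``carries enough of the rigidity matroid'', and you propose to route this through Theorem~\ref{thm:borceastreinu} and the map $\Phi$. That detour is not how the cited proof goes and is unlikely to close cleanly: the birational equivalence relates \emph{row} sets of the rigidity matrix to coordinate subsets of the Grassmannian, whereas what shifting touches is the \emph{column} side of the compound matrix, and the vertex~$1$/dimension-shift bookkeeping you anticipate does not evaporate. The actual mechanism is more direct: for a generic $p$ one can change bases so that $R(p)_{\Sigma_d}$ becomes, up to invertible row and column operations, the submatrix of a generic compound matrix $X^{\wedge d}$ on the rows $\Sigma_d$ and a fixed set of $dn-(d^2+d-1)$ columns whose $<_\ell$-greedy representative is precisely $\Lambda_d$. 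Once that identification is made, rigidity is literally the statement that the greedy column basis of $X^{\wedge d}_{\Sigma_d}$ reaches $134\dots(d{+}1)n$, and Stage~(I) and Stage~(II) collapse into a single step. Your Grassmannian route would need to reproduce this identification, not merely be ``consistent'' with it.

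Your converse in Stage~(II) is also not yet an argument. A componentwise-shifted $\Gamma$ with $134\dots(d{+}1)n\notin\Gamma_d$ can still have many $d$-simplices (anything not dominating $134\dots(d{+}1)n$ is allowed), so ``the star of $n$ is too small'' does not follow from shiftedness alone; you need the rank identification above, after which the converse is immediate because the greedy basis stops short of the target column. Finally, your parenthetical ``in fact for every linear extension $<_\ell$'' is correct, but it is a consequence of that same identification (the shifted complex is always componentwise shifted, and membership of $134\dots(d{+}1)n$ is determined by the rank, not by $<_\ell$), not something you have independently established.
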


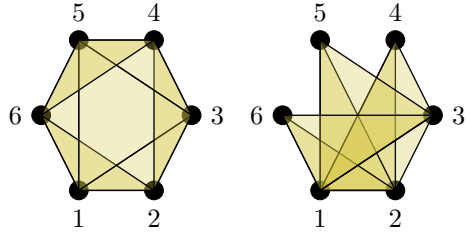
\begin{figure}
    \centering
    \begin{tikzpicture}
        \coordinate[Bullet=black, label=below:1] (n1) at (0,0);
            \coordinate[Bullet=black, label=below:2] (n2) at (1,0);
            \coordinate[Bullet=black, label=right:3] (n3) at (1.5,1);
            \coordinate[Bullet=black, label=above:4] (n4) at (1,2);
            \coordinate[Bullet=black, label=above:5] (n5) at (0,2);
            \coordinate[Bullet=black, label=left:6] (n6) at (-0.5,1);

            \filldraw[fill=yellow!80!black,line width=0.5pt,fill opacity=0.3] (0,0) -- (1,0) -- (1.5,1) -- (0,0);
            \filldraw[fill=yellow!80!black,line width=0.5pt,fill opacity=0.3] (0,0) -- (0,2) -- (-0.5,1) -- (0,0);
            \filldraw[fill=yellow!80!black,line width=0.5pt,fill opacity=0.3] (0,0) -- (-0.5,1) -- (1,0) -- (0,0);
            \filldraw[fill=yellow!80!black,line width=0.5pt,fill opacity=0.3] (0,0) -- (1.5,1) -- (0,2) -- (0,0);
            \filldraw[fill=yellow!80!black,line width=0.5pt,fill opacity=0.3] (1,0) -- (1.5,1) -- (1,2) -- (1,0);
            \filldraw[fill=yellow!80!black,line width=0.5pt,fill opacity=0.3] (1.5,1) -- (1,2) -- (0,2) -- (1.5,1);
            \filldraw[fill=yellow!80!black,line width=0.5pt,fill opacity=0.3] (1,2) -- (0,2) -- (-0.5,1) -- (1,2);
    \end{tikzpicture}
    \begin{tikzpicture}
        \coordinate[Bullet=black, label=below:1] (n1) at (0,0);
            \coordinate[Bullet=black, label=below:2] (n2) at (1,0);
            \coordinate[Bullet=black, label=right:3] (n3) at (1.5,1);
            \coordinate[Bullet=black, label=above:4] (n4) at (1,2);
            \coordinate[Bullet=black, label=above:5] (n5) at (0,2);
            \coordinate[Bullet=black, label=left:6] (n6) at (-0.5,1);

            \filldraw[fill=yellow!80!black,line width=0.5pt,fill opacity=0.3] (0,0) -- (1,0) -- (1.5,1) -- (0,0);
            \filldraw[fill=yellow!80!black,line width=0.5pt,fill opacity=0.3] (0,0) -- (1,0) -- (1,2) -- (0,0);
            \filldraw[fill=yellow!80!black,line width=0.5pt,fill opacity=0.3] (0,0) -- (1,0) -- (0,2) -- (0,0);
            \filldraw[fill=yellow!80!black,line width=0.5pt,fill opacity=0.3] (0,0) -- (1,0) -- (-0.5,1) -- (0,0);
            \filldraw[fill=yellow!80!black,line width=0.5pt,fill opacity=0.3] (0,0) -- (1.5,1) -- (1,2) -- (0,0);
            \filldraw[fill=yellow!80!black,line width=0.5pt,fill opacity=0.3] (0,0) -- (1.5,1) -- (0,2) -- (0,0);
            \filldraw[fill=yellow!80!black,line width=0.5pt,fill opacity=0.3] (0,0) -- (1.5,1) -- (-0.5,1) -- (0,0);
    \end{tikzpicture}
    \caption{The complex on the left is a-2-cyclic and rigid (in fact it is a basis in $\mathcal{R}_6^2$, exterior algebraically shifting it with respect to the complete lexicographic ordering of $\binom{[6]}{2}$ yields the complex on the right.}
    \label{fig:Shift}
\end{figure}

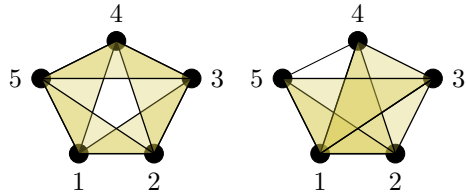
\begin{figure}
    \centering
    \begin{tikzpicture}
        \coordinate[Bullet=black, label=below:1] (n1) at (0,0);
            \coordinate[Bullet=black, label=below:2] (n2) at (1,0);
            \coordinate[Bullet=black, label=right:3] (n3) at (1.5,1);
            \coordinate[Bullet=black, label=above:4] (n4) at (0.5,1.5);
            \coordinate[Bullet=black, label=left:5] (n5) at (-0.5,1);

            \filldraw[fill=yellow!80!black,line width=0.5pt,fill opacity=0.3] (0,0) -- (1,0) -- (1.5,1) -- (0,0);
            \filldraw[fill=yellow!80!black,line width=0.5pt,fill opacity=0.3] (0,0) -- (0.5,1.5) -- (-0.5,1) -- (0,0);
            \filldraw[fill=yellow!80!black,line width=0.5pt,fill opacity=0.3] (0,0) -- (-0.5,1) -- (1,0) -- (0,0);
            \filldraw[fill=yellow!80!black,line width=0.5pt,fill opacity=0.3] (1,0) -- (1.5,1) -- (0.5,1.5) -- (1,0);
            \filldraw[fill=yellow!80!black,line width=0.5pt,fill opacity=0.3] (1.5,1) -- (0.5,1.5) -- (-0.5,1) -- (1.5,1);
    \end{tikzpicture}
    \begin{tikzpicture}
        \coordinate[Bullet=black, label=below:1] (n1) at (0,0);
            \coordinate[Bullet=black, label=below:2] (n2) at (1,0);
            \coordinate[Bullet=black, label=right:3] (n3) at (1.5,1);
            \coordinate[Bullet=black, label=above:4] (n4) at (0.5,1.5);
            \coordinate[Bullet=black, label=left:5] (n5) at (-0.5,1);

            \filldraw[fill=yellow!80!black,line width=0.5pt,fill opacity=0.3] (0,0) -- (1,0) -- (1.5,1) -- (0,0);
            \filldraw[fill=yellow!80!black,line width=0.5pt,fill opacity=0.3] (0,0) -- (1,0) -- (0.5,1.5) -- (0,0);
            \filldraw[fill=yellow!80!black,line width=0.5pt,fill opacity=0.3] (0,0) -- (1,0) -- (-0.5,1) -- (0,0);
            \filldraw[fill=yellow!80!black,line width=0.5pt,fill opacity=0.3] (0,0) -- (1.5,1) -- (0.5,1.5) -- (0,0);
            \filldraw[fill=yellow!80!black,line width=0.5pt,fill opacity=0.3] (0,0) -- (1.5,1) -- (-0.5,1) -- (0,0);
            \draw[-] (n4) -- (n5);
    \end{tikzpicture}
    \caption{Exterior algebraically shifting a pure complex does not necessarily yield a pure complex. The complex on the right is 2-dimensional but has 45 as a maximal simplex.}
    \label{fig:Shift5cycle}
\end{figure}

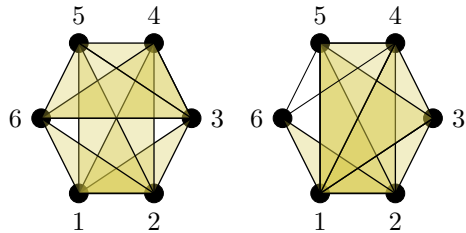
\begin{figure}
    \centering
    \begin{tikzpicture}
        \coordinate[Bullet=black, label=below:1] (n1) at (0,0);
            \coordinate[Bullet=black, label=below:2] (n2) at (1,0);
            \coordinate[Bullet=black, label=right:3] (n3) at (1.5,1);
            \coordinate[Bullet=black, label=above:4] (n4) at (1,2);
            \coordinate[Bullet=black, label=above:5] (n5) at (0,2);
            \coordinate[Bullet=black, label=left:6] (n6) at (-0.5,1);

            \filldraw[fill=yellow!80!black,line width=0.5pt,fill opacity=0.3] (0,0) -- (1,0) -- (1.5,1) -- (0,0);
            \filldraw[fill=yellow!80!black,line width=0.5pt,fill opacity=0.3] (0,0) -- (1,0) -- (1,2) -- (0,0);
            \filldraw[fill=yellow!80!black,line width=0.5pt,fill opacity=0.3] (0,0) -- (1,0) -- (0,2) -- (0,0);
            \filldraw[fill=yellow!80!black,line width=0.5pt,fill opacity=0.3] (0,0) -- (1,0) -- (-0.5,1) -- (0,0);
            \filldraw[fill=yellow!80!black,line width=0.5pt,fill opacity=0.3] (1.5,1) -- (1,2) -- (0,2) -- (1.5,1);
            \filldraw[fill=yellow!80!black,line width=0.5pt,fill opacity=0.3] (1.5,1) -- (1,2) -- (-0.5,1) -- (1.5,1);
            \filldraw[fill=yellow!80!black,line width=0.5pt,fill opacity=0.3] (1.5,1) -- (0,2) -- (-0.5,1) -- (1.5,1);
    \end{tikzpicture}
    \begin{tikzpicture}
        \coordinate[Bullet=black, label=below:1] (n1) at (0,0);
            \coordinate[Bullet=black, label=below:2] (n2) at (1,0);
            \coordinate[Bullet=black, label=right:3] (n3) at (1.5,1);
            \coordinate[Bullet=black, label=above:4] (n4) at (1,2);
            \coordinate[Bullet=black, label=above:5] (n5) at (0,2);
            \coordinate[Bullet=black, label=left:6] (n6) at (-0.5,1);

            \filldraw[fill=yellow!80!black,line width=0.5pt,fill opacity=0.3] (0,0) -- (1,0) -- (1.5,1) -- (0,0);
            \filldraw[fill=yellow!80!black,line width=0.5pt,fill opacity=0.3] (0,0) -- (1,0) -- (1,2) -- (0,0);
            \filldraw[fill=yellow!80!black,line width=0.5pt,fill opacity=0.3] (0,0) -- (1,0) -- (0,2) -- (0,0);
            \filldraw[fill=yellow!80!black,line width=0.5pt,fill opacity=0.3] (0,0) -- (1,0) -- (-0.5,1) -- (0,0);
            \filldraw[fill=yellow!80!black,line width=0.5pt,fill opacity=0.3] (0,0) -- (1.5,1) -- (1,2) -- (0,0);
            \filldraw[fill=yellow!80!black,line width=0.5pt,fill opacity=0.3] (0,0) -- (1.5,1) -- (0,2) -- (0,0);
            \filldraw[fill=yellow!80!black,line width=0.5pt,fill opacity=0.3] (0,0) -- (1,2) -- (0,2) -- (0,0);
            \draw[-] (1,2) -- (0,2);
            \draw[-] (1,2) -- (-0.5,1);
            \draw[-] (0,2) -- (-0.5,1);
    \end{tikzpicture}
    \caption{An example of exterior algebraically shifting a flexible complex.}
    \label{fig:Shiftflexible}
\end{figure}

The following properties of exterior algebraically shifted complexes will be relevant in our discussion of independent (in $\mathcal{R}_n^d$) $d$-volume rigid simplicial complexes:

\begin{lemma}\label{lem:shiftingproperties}(\cite{kalai2002algebraic})
Let $\Sigma$ be a $d$-dimensional simplicial complex, then
\begin{enumerate}
    \item $f(\Delta^{\text{ext}}_{<_{\ell}}(\Sigma))=f(\Sigma)$;
    \item $\beta(\Delta^{\text{ext}}_{<_{\ell}}(\Sigma))=\beta(\Sigma)$;
    \item $\beta(\Delta^{\text{ext}}_{<_{\ell}}(\Sigma))_d=|\{i_1\dots i_{d+1}\in\Sigma_d:1\not\in i_1\dots i_{d+1}\}|$;
\end{enumerate}
for any $<_{\ell}$ as above.
\end{lemma}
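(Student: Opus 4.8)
All three parts are due to \citet{kalai2002algebraic}, and the plan is to reassemble the arguments in the matrix language of the algorithm above, using without proof that $\Delta:=\Delta^{\text{ext}}_{<_{\ell}}(\Sigma)$ is a simplicial complex, shifted in the componentwise-domination order (so in particular vertex $1$ is the minimum). For part (1), the key point is that the column of $X^{\wedge k}$ indexed by $j_1\dots j_{k+1}$ is the coordinate vector, in the monomial basis $\{e_T:T\in\binom{[n]}{k+1}\}$ of $\bigwedge^{k+1}\mathbb{R}^n$, of $f_{j_1}\wedge\dots\wedge f_{j_{k+1}}$, where $f_j$ is the $j$-th column of $X$. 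By the Sylvester--Franke identity $\det(X^{\wedge k})=(\det X)^{\binom{n-1}{k}}\neq0$ for generic $X$, so $X^{\wedge k}$ is invertible; hence $X^{\wedge k}_{\Sigma_k}$, being $|\Sigma_k|$ rows of an invertible matrix, has full row rank, so $\colspace(X^{\wedge k}_{\Sigma_k})$ has dimension $|\Sigma_k|$ and every column basis of it — the greedy one in particular — has exactly $|\Sigma_k|$ elements. Thus $|\Delta_k|=|\Sigma_k|$ for all $k$, i.e. $f(\Delta)=f(\Sigma)$.

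For part (2), I would first use part (1) to reduce to the boundary ranks: since $\beta(\Gamma)_k=f(\Gamma)_k-\rank\partial_k-\rank\partial_{k+1}$ over $\mathbb{Q}$ for every complex $\Gamma$, it suffices to show $\rank\partial_k(\Delta)=\rank\partial_k(\Sigma)$ for all $k$. The simplicial boundary is not equivariant under a change of basis of $\mathbb{R}^n$, so this cannot be read off from the compound matrices directly; instead one uses Kalai's reformulation of (reduced) simplicial homology via multiplication by a generic linear form on the exterior face ring. Passing to the generic basis $f_1,\dots,f_n$ is a linear change of coordinates on $\bigwedge\mathbb{R}^n$, which preserves the ranks of generic-linear-form multiplication; and the greedy step replaces the resulting graded subspaces by their $<_{\ell}$-initial monomial subspaces, simultaneously in all degrees, which preserves both their dimensions and the ranks of the maps a generic linear form induces. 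I expect this last point — that a generic linear form has the same multiplication ranks on a graded subspace of $\bigwedge\mathbb{R}^n$ and on its initial monomial subspace — to be the only genuinely non-formal ingredient, and I would cite \citet{kalai2002algebraic} for it rather than reprove it.

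For part (3), part (2) gives $\beta(\Delta)_d=\beta(\Sigma)_d$, so it suffices to prove that any $d$-dimensional shifted complex $\Gamma$ satisfies $\beta(\Gamma)_d=|\{S\in\Gamma_d:1\notin S\}|$; applying this to $\Gamma=\Delta$ yields the stated identity (with the count, as the proof makes clear, taken over the $d$-faces of $\Delta$ — equivalently, by (2), equal to $\beta(\Sigma)_d$). Since $\Gamma$ has no faces of dimension $>d$, $\beta(\Gamma)_d=\dim\ker(\partial_d\colon C_d(\Gamma)\to C_{d-1}(\Gamma))=f(\Gamma)_d-\rank\partial_d$, so it is equivalent to show $\rank\partial_d=|\{S\in\Gamma_d:1\in S\}|$. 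For ``$\geq$'': if $1\in S\in\Gamma_d$ then $S\setminus\{1\}$ is the unique term of $\partial_dS$ that omits vertex $1$, occurring with coefficient $\pm1$, and distinct such $S$ give distinct such terms, so $\{\partial_dS:1\in S\in\Gamma_d\}$ is linearly independent. For ``$\leq$'', equivalently $\dim\ker\partial_d\geq|\{S\in\Gamma_d:1\notin S\}|$: for $S=\{s_1<\dots<s_{d+1}\}\in\Gamma_d$ with $s_1\geq2$, set $\sigma_S=\{1\}\cup S$; every facet of $\sigma_S$ is obtained from $S$ by keeping each coordinate fixed or decreasing it (some $s_i$ is replaced by $1$ or by $s_{i-1}$), so by shiftedness every facet of $\sigma_S$ lies in $\Gamma_d$ and $z_S:=\partial_{d+1}\sigma_S$ is a $d$-cycle of $\Gamma$. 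In $z_S$ the face $S$ occurs with coefficient $\pm1$ and every other term contains vertex $1$, so $\{z_S:1\notin S\in\Gamma_d\}$ is triangular with respect to the $d$-faces of $\Gamma$ omitting vertex $1$, hence linearly independent. The two bounds together give $\rank\partial_d=|\{S\in\Gamma_d:1\in S\}|$, and the result follows.
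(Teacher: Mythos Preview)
The paper does not prove this lemma; it is stated with attribution to \citet{kalai2002algebraic} and used as a black box. Your proposal therefore goes beyond what the paper itself does, and there is no in-paper proof to compare against.

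Your arguments are correct. Part (1) is exactly the right observation: invertibility of $X^{\wedge k}$ (over the field of rational functions in the $x_{i,j}$) forces $X^{\wedge k}_{\Sigma_k}$ to have full row rank $|\Sigma_k|$, so any column basis, greedy or otherwise, has that size. Part (2) is honestly sketched, with the one genuinely non-formal ingredient --- that passing to $<_\ell$-initial monomial subspaces preserves the ranks of multiplication by a generic linear form --- flagged and deferred to Kalai, which is appropriate here. Part (3) is a clean direct computation for shifted complexes; both triangularity observations (on $\{\partial_d S:1\in S\}$ via the face $S\setminus\{1\}$, and on $\{z_S:1\notin S\}$ via the face $S$) are valid, and the shiftedness check that every facet of $\{1\}\cup S$ lies componentwise below $S$ is correct.

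One point worth making explicit, which you handle parenthetically: as printed, part (3) has $\Sigma_d$ on the right-hand side, which cannot be right in general --- the number of $d$-faces of $\Sigma$ avoiding vertex $1$ depends on the vertex labelling, whereas $\beta(\Delta(\Sigma))_d=\beta(\Sigma)_d$ does not. Your reading of it as $\Delta(\Sigma)_d$ is the intended one, and the paper's own use of the lemma immediately afterwards (deducing that the shifted complex of an a-$d$-cyclic $\Sigma$ has every $d$-simplex through vertex $1$) confirms this.
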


Notably, if $\Sigma$ is independent in $\mathcal{R}_n^d$, then $f(\Delta^{\text{ext}}_{<_{\ell}}(\Sigma))\leq dn-(d^2+d-1)$ (with equality if and only if $\Sigma$ is a basis) and, by corollary \ref{cor:acyclic}, $\beta(\Delta^{\text{ext}}_{<_{\ell}})_d=0$.
Therefore the complex defined by the $d$-simplices of the exterior algebraically shifted complex of an independent $d$-dimensional simplicial complex is a star with apex vertex 1.

\begin{theorem}\label{thm:bound}
Suppose that $\Sigma$ is a $d$-dimensional simplicial complex on $n$ vertices and is a basis in $\mathcal{R}_n^d$. Then, for all $0\leq k\leq d$:
\begin{equation*}
    f(\Sigma)_k\geq \binom{d+1}{k+1}+\sum\limits_{l=0}^{d-1}(n-d-1)\binom{d-l}{k-l}.
\end{equation*}
Moreover, this bound is sharp.
\end{theorem}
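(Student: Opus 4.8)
The plan is to prove the lower bound by induction on $n$ after passing to an exterior algebraic shift, and to get sharpness from an explicit construction. Write $R_k(n)$ for the claimed right-hand side. The hockey-stick identity $\sum_{l=0}^{d-1}\binom{d-l}{k-l}=\binom{d+1}{k}$ (for $0\le k\le d-1$) rewrites it as $R_k(n)=\binom{d+1}{k+1}+(n-d-1)\binom{d+1}{k}$, so $R_k(n)-R_k(n-1)=\binom{d+1}{k}$; the case $k=d$ is just the identity $f(\Sigma)_d=dn-(d^2+d-1)$ defining a basis, so I assume $0\le k\le d-1$. Given a basis $\Sigma$ on $n$ vertices, I would fix $<_\ell$ with $134\cdots(d+1)n\in\Delta:=\Delta^{\mathrm{ext}}_{<_\ell}(\Sigma)$, which exists by Theorem \ref{thm:bnp}. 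Then $\Delta$ is shifted, $f(\Delta)=f(\Sigma)$ by Lemma \ref{lem:shiftingproperties}(1), $\Delta$ is rigid by Theorem \ref{thm:bnp} together with the idempotence of exterior shifting on shifted complexes, and since $f(\Delta)_d$ equals the rank, $\Delta_d$ is a basis of $\mathcal{R}_n^d$; by Corollary \ref{cor:acyclic} and Lemma \ref{lem:shiftingproperties}(2)--(3) every $d$-face of $\Delta$ contains vertex $1$, and $\Delta_d$ is supported on all of $[n]$ because an independent set omitting a vertex would lie in a copy of $\mathcal{R}_{n-1}^d$, which has smaller rank (Lemma \ref{lem:rankofrigmatroid}).

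The base cases $n\in\{d+1,d+2\}$ are immediate: for $n=d+1$, $\Sigma$ is the $d$-simplex and $f(\Sigma)_k=\binom{d+1}{k+1}=R_k(d+1)$; for $n=d+2$, $\Delta_d$ is forced to be the set of all $(d+1)$-subsets of $[d+2]$ through vertex $1$, giving $f(\Delta)_k=\binom{d+2}{k+1}=R_k(d+2)$ by Pascal. For $n\ge d+3$, let $\mathrm{del}_n(\Delta)=\{A\in\Delta:n\notin A\}$ and $L=\{A\in\binom{[n-1]}{d}:A\cup\{n\}\in\Delta_d\}$. Since $\mathrm{del}_n(\Delta)_d$ is a restriction of the independent set $\Delta_d$, it is independent in $\mathcal{R}_{n-1}^d$, so $|\mathrm{del}_n(\Delta)_d|\le d(n-1)-(d^2+d-1)$ and hence $|L|=|\Delta_d|-|\mathrm{del}_n(\Delta)_d|\ge d$. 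Also $134\cdots(d+1)(n-1)\preceq 134\cdots(d+1)n\in\Delta_d$ and misses $n$, so it lies in $\mathrm{del}_n(\Delta)_d$; as $\mathrm{del}_n(\Delta)$ is shifted, Theorem \ref{thm:bnp} (with idempotence) shows it is rigid, so $\mathrm{del}_n(\Delta)_d$ contains a basis $B$ of $\mathcal{R}_{n-1}^d$, and then $f(\mathrm{del}_n(\Delta))_k\ge f(\langle B\rangle)_k\ge R_k(n-1)$ by the inductive hypothesis applied to the pure basis complex $\langle B\rangle$ on $[n-1]$. For the contribution of vertex $n$: every $\{n\}\cup D$ with $D$ a $k$-subset of a member of $L$ is a face of $\Delta$, so the number of $k$-faces of $\Delta$ through $n$ is at least the size of the $k$-shadow of $L$; by Kruskal--Katona this is at least the $k$-shadow of the colexicographic initial segment of $d$ many $d$-element sets, which for $k\le d-1$ is all of $\binom{[d+1]}{k}$, i.e.\ $\binom{d+1}{k}$. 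Therefore $f(\Sigma)_k=f(\Delta)_k=f(\mathrm{del}_n(\Delta))_k+\#\{\,k\text{-faces of }\Delta\text{ through }n\,\}\ge R_k(n-1)+\binom{d+1}{k}=R_k(n)$.

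For sharpness I would exhibit, for every $n\ge d+1$, one basis meeting the bound for all $k$: let $\Delta_\ast$ be the pure $d$-complex on $[n]$ whose $d$-faces are $\{1,\dots,d+1\}$ together with $\{1,\dots,d+1,v\}\setminus\{t\}$ for $2\le t\le d+1$ and $d+2\le v\le n$ --- equivalently, the order ideal of $134\cdots(d+1)n$ in $\binom{[n]}{d+1}$, or the cone with apex $1$ over the $(d-1)$-simplex on $\{2,\dots,d+1\}$ with each of the vertices $d+2,\dots,n$ stacked onto each of its facets. It has $1+d(n-d-1)=dn-(d^2+d-1)$ top faces and contains $134\cdots(d+1)n$, so by Theorem \ref{thm:bnp} and Lemma \ref{lem:rankofrigmatroid} it is a basis. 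Each of its $d$-faces contains vertex $1$ and at most one of the vertices $d+2,\dots,n$, so counting the $(k+1)$-subsets of $[n]$ lying inside a $d$-face according to whether they meet $\{d+2,\dots,n\}$ gives $f(\Delta_\ast)_k=\binom{d+1}{k+1}+(n-d-1)\binom{d+1}{k}=R_k(n)$ for $0\le k\le d-1$ (and $f(\Delta_\ast)_d=dn-(d^2+d-1)$).

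I expect the inductive step to be the main obstacle, in two parts: re-establishing that the deletion $\mathrm{del}_n(\Delta)$ is rigid --- this is precisely where the shifted structure coming from exterior shifting is essential, via Theorem \ref{thm:bnp} and the idempotence of shifting --- and the estimate $\#\{k\text{-faces through }n\}\ge\binom{d+1}{k}$, which hinges on the two numerical facts that an independence count forces $|L|\ge d$ and that $d$ many $d$-element sets already have full $k$-shadow $\binom{d+1}{k}$ for $k\le d-1$. The base cases, the binomial identities, and the face count for $\Delta_\ast$ should be routine.
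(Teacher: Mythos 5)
Your argument is correct in outline, but it takes a genuinely different route from the paper, and it is heavier than it needs to be. The paper's proof is direct: since $\Delta=\Delta^{\text{ext}}_{<_\ell}(\Sigma)$ is shifted and contains $134\cdots(d+1)n$, it must contain the entire order ideal $\Lambda_d$ of $d$-simplices below $134\cdots(d+1)n$ in the domination order; this ideal already has $d(n-d-1)+1=dn-(d^2+d-1)$ elements, which is exactly $f(\Delta)_d$, so $\Delta_d=\Lambda_d$ on the nose. One then counts the $k$-faces of $\Lambda$ (your $\Delta_\ast$, the paper's LGRC) once and for all, with no induction, and the same complex gives sharpness. Your proof replaces this with an induction on $n$ via deletion of the last vertex, paying for it with two external ingredients the paper never needs: (i) idempotence of exterior shifting on shifted complexes, which you invoke three times --- to make $\Delta_d$ a basis (so that $\mathrm{del}_n(\Delta)_d$ is independent in $\mathcal{R}_{n-1}^d$ and $|L|\geq d$), to make $\mathrm{del}_n(\Delta)$ rigid via Theorem \ref{thm:bnp}, and to certify rigidity of $\Delta_\ast$ for sharpness; and (ii) the iterated Kruskal--Katona theorem, to get the $k$-shadow bound $\binom{d+1}{k}$ from $|L|\geq d$ alone. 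Both facts are true and standard, so I see no fatal gap, but neither is established in the paper, and both evaporate under the paper's observation: once $\Delta_d=\Lambda_d$ is known exactly, the link of $n$ is literally the $d$ sets $1i_1\cdots i_{d-1}$ with $i_1\cdots i_{d-1}\in\binom{\{2,\dots,d+1\}}{d-1}$, whose $k$-shadow is visibly all of $\binom{[d+1]}{k}$, and rigidity of the deletion is not needed because no induction is needed. (For sharpness, the paper's later lemma that the LGRC is generically globally rigid gives you basis-ness of $\Delta_\ast$ without appealing to idempotence.) What your approach buys is a self-propagating induction that only uses $|L|\geq d$ rather than the exact structure of the shifted complex, and your hockey-stick rewriting $R_k(n)=\binom{d+1}{k+1}+(n-d-1)\binom{d+1}{k}$ is a cleaner closed form than the paper's diagonal-by-diagonal count; the cost is the extra machinery above, plus small housekeeping you should make explicit (that $\mathrm{del}_n(\Delta)$ is shifted with respect to the restricted order and spans $[n-1]$, and that independence of a family of simplices on $[n-1]$ in $\mathcal{R}_n^d$ coincides with independence in $\mathcal{R}_{n-1}^d$ because the corresponding rows vanish on the columns of vertex $n$).
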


\begin{proof}
Suppose that $\Sigma$ is a $d$-dimensional simplicial complex as in the statement.
Then
\begin{equation*}
    (f(\Sigma)_0,f(\Sigma)_d)=(n,dn-(d^2+d-1)).
\end{equation*}
By Theorem \ref{thm:bnp}, the (lexicographically induced) partial order $\left(\binom{[n]}{d+1},<_p\right)$ admits a linear extension $<_{\ell}$ so that $134\dots(d+1)n\in(\Delta_{<_{\ell}}^{\text{ext}}(\Sigma))_d$.
We will show that there a certain set of $dn-(d^2+d-1)$ $d$-simplices are less than or equal to $134\dots(d+1)n$ with respect to $<_p$, and therefore will be present in any shifted complex with respect to a linear extension of $<_p$.
We will then show that this set of $d$-simplices has the desired number of lower-dimensional simplices.

Now,
\begin{equation*}
    134\dots(d+1)n\geq_p 134\dots(d+1)(n-1)\geq_p\dots\geq_p 134\dots(d+1)(d+2),
\end{equation*}
this is a chain of length $n-d-1$.
Similarly,
\begin{equation*}
    134\dots(d+1)n\geq_p 124\dots(d+1)n\geq_p\dots\geq_p 124\dots(d+1)(d+2),
\end{equation*}
another chain (excluding $134\dots(d+1)n$) of length $n-d-1$.
Indeed, for each $(d-1)$-tuple $i_1\dots i_{d-1}\in\binom{[d+1]\setminus\{1\}}{d-1}$, of which there are $d$, there is a chain of length $n-d-1$ (excluding the first term) of the form
\begin{equation*}
    134\dots(d+1)n\geq_p 1i_1\dots i_{d-1}n\geq_p\dots\geq_p 1i_1\dots i_{d-1}(d+2).
\end{equation*}
If $j_1\dots j_{d-1}\not\in\binom{[d+1]\setminus\{1\}}{d-1}$, and $1<j_1$, then $j_{d-1}>d+1$, and so $1j_1\dots j_{d-1}k\not\leq_p 134\dots(d+1)n$, for any $j_{d-1}<k\leq n$.

This accounts for $d(n-d-1)$ $d$-simplices, there is just one more, the smallest $d$-simplex in any linear extension, $123\dots d(d+1)$.
Therefore there are $d(n-d-1)+1=dn-(d^2+d-1)$ $d$-simplices smaller that $134\dots(d+1)n$ in $\left(\binom{[n]}{d+1},\leq_p\right)$.
Call the set of such simplices $\Lambda_d$.

It remains to show that the sets of $k$-simplices in the simplicial complex $\Lambda$, defined by $\Lambda_d$, have the desired size.

Consider the \textit{stacked Hässe diagram} of $\left(\binom{[n]}{d+1},<_p\right)$ as in figure \ref{fig:stackedhasse}.
We will count the number of distinct $(k+1)$-tuples appearing \textit{diagonal-wise} along the first two diagonals of the first level, and the first diagonals of subsequent levels until arriving at $134\dots(d+1)n$.
Within the first diagonal of the first layer, there are
\begin{equation}\label{eq:l1d1}
    \binom{d}{k+1}+(n-d)\binom{d}{k}
\end{equation}
distinct $k$-simplices, the first term coming from those in the first $d$ terms, the second coming from those featuring one of $d+1,\dots n$.
Within the second diagonal of the first layer, there are
\begin{equation}\label{eq:l1d2}
    (n-d-1)\binom{d-1}{k-1}
\end{equation}
Within the first diagonal of layer $l$, $2\leq l\leq d-1$, as $134\dots(d+1)n$ is contained in the first diagonal of layer $d-1$, there are
\begin{equation}\label{eq:lld1}
    (n-d-1)\binom{d-l}{k-l}.
\end{equation}
Adding together quantities \ref{eq:l1d1}, \ref{eq:l1d2} and each of \ref{eq:lld1} yields the formula
\begin{equation}\label{eq:minksimps}
    f(\Lambda)_k=\binom{d+1}{k+1} +\sum\limits_{l=0}^{d-1}(n-d-1)\binom{d-l}{k-l}.
\end{equation}

Now, suppose that $\Sigma$ is a basis in $\mathcal{R}_n^d$, then $\Delta(\Sigma)=\Delta_{<_{\ell}}^{\text{ext}}(\Sigma)$ contains $134\dots(d+1)n$ as a $d$-simplex.
As $\Delta(\Sigma)$ is a shifted complex, it must at least contain all the $d$-simplices less than $134\dots(d+1)n$ in the partial order.
Therefore $\Lambda_d\subseteq\Delta(\Sigma)_d$ (in fact, since they have equal cardinality, they are equal), and thus $f(\Delta(\Sigma))_k\geq f(\Lambda)_k$, for all $0\leq k\leq d$.
Finally, by Lemma \ref{lem:shiftingproperties},
\begin{equation*}
    f(\Sigma)_k=f(\Delta(\Sigma))_k\geq f(\Lambda)_k=\binom{d+1}{k+1}+\sum\limits_{l=0}^{d-1}(n-d-1)\binom{d-l}{k-l},
\end{equation*}
for all $0\leq k\leq d$, proving the Theorem.
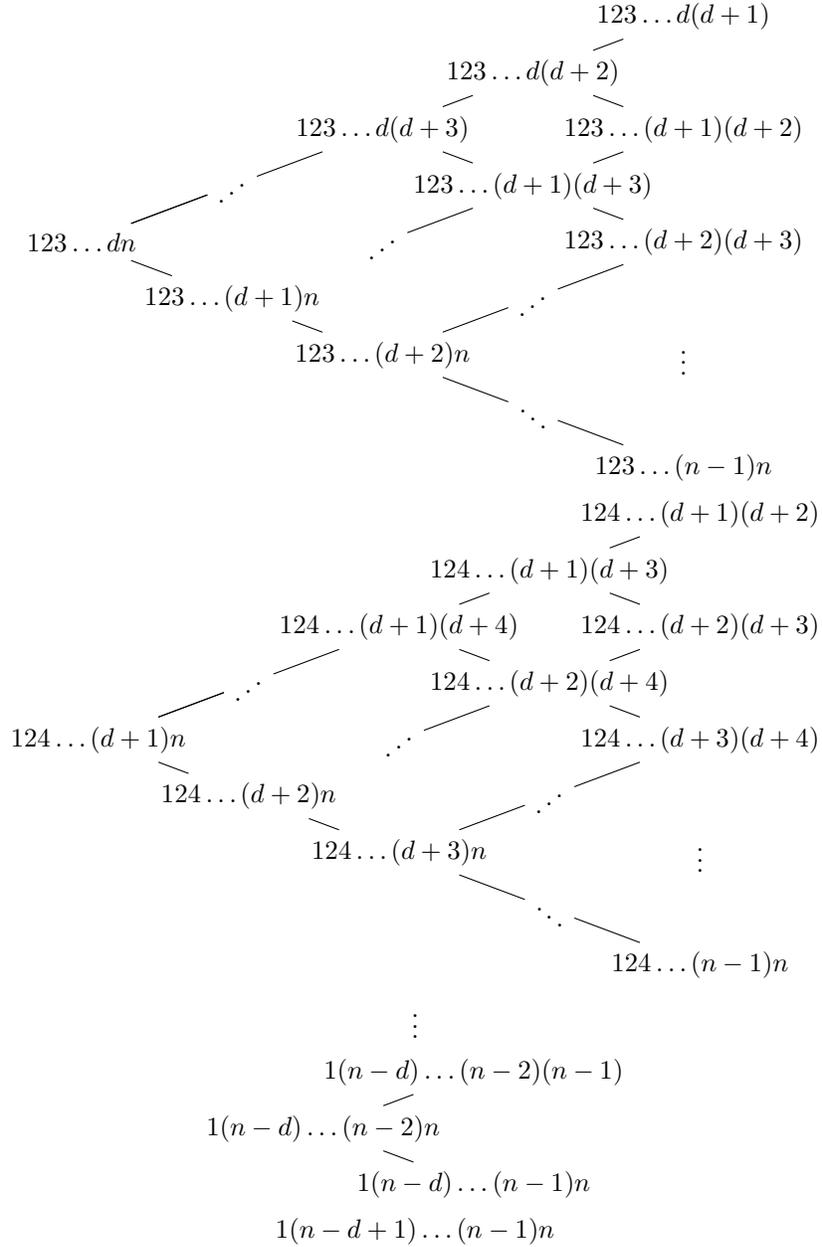
\begin{figure}
    \centering
    \begin{tikzpicture}
		\node (n1) at (0cm,0cm) {$123\dots d(d+1)$};
		\node (n2) at (-2cm,-0.75cm) {$123\dots d(d+2)$};
		\node (n3) at (-4cm,-1.5cm) {$123\dots d(d+3)$};
        \node (n4) at (-6cm,-2.25cm) {\scalebox{-1}[1]{$\ddots$}};
        \node (n5) at (-8cm,-3cm) {$123\dots dn$};
        \node (n6) at (0cm,-1.5cm) {$123\dots(d+1)(d+2)$};
        \node (n7) at (-2,-2.25cm) {$123\dots(d+1)(d+3)$};
        \node (n8) at (-4cm,-3cm) {\scalebox{-1}[1]{$\ddots$}};
        \node (n9) at (-6cm,-3.75cm) {$123\dots(d+1)n$};
        \node (n10) at (0cm,-3cm) {$123\dots(d+2)(d+3)$};
        \node (n11) at (-2cm,-3.75cm) {\scalebox{-1}[1]{$\ddots$}};
        \node (n12) at (-4cm,-4.5cm) {$123\dots(d+2)n$};
        \node (n13) at (0cm,-4.5cm) {$\vdots$};
        \node (n14) at (-2cm,-5.25cm) {$\ddots$};
        \node (n15) at (0cm,-6cm) {$123\dots(n-1)n$};
		\draw[-] (n1) -- (n2);
        \draw[-] (n2) -- (n3);
		\draw[-] (n3) -- (n4);
        \draw[-] (n4) -- (n5);
        \draw[-] (n4) -- (n5);
        \draw[-] (n2) -- (n6);
        \draw[-] (n3) -- (n7);
        \draw[-] (n6) -- (n7);
        \draw[-] (n7) -- (n8);
        \draw[-] (n7) -- (n10);
        \draw[-] (n5) -- (n9);
        \draw[-] (n9) -- (n12);
        \draw[-] (n12) -- (n14);
        \draw[-] (n14) -- (n15);
        \draw[-] (n10) -- (n11);
        \draw[-] (n11) -- (n12);
    \end{tikzpicture}\\
    \begin{tikzpicture}
		\node (n1) at (0cm,0cm) {$124\dots(d+1)(d+2)$};
		\node (n2) at (-2cm,-0.75cm) {$124\dots(d+1)(d+3)$};
		\node (n3) at (-4cm,-1.5cm) {$124\dots(d+1)(d+4)$};
        \node (n4) at (-6cm,-2.25cm) {\scalebox{-1}[1]{$\ddots$}};
        \node (n5) at (-8cm,-3cm) {$124\dots(d+1)n$};
        \node (n6) at (0cm,-1.5cm) {$124\dots(d+2)(d+3)$};
        \node (n7) at (-2,-2.25cm) {$124\dots(d+2)(d+4)$};
        \node (n8) at (-4cm,-3cm) {\scalebox{-1}[1]{$\ddots$}};
        \node (n9) at (-6cm,-3.75cm) {$124\dots(d+2)n$};
        \node (n10) at (0cm,-3cm) {$124\dots(d+3)(d+4)$};
        \node (n11) at (-2cm,-3.75cm) {\scalebox{-1}[1]{$\ddots$}};
        \node (n12) at (-4cm,-4.5cm) {$124\dots(d+3)n$};
        \node (n13) at (0cm,-4.5cm) {$\vdots$};
        \node (n14) at (-2cm,-5.25cm) {$\ddots$};
        \node (n15) at (0cm,-6cm) {$124\dots(n-1)n$};
		\draw[-] (n1) -- (n2);
        \draw[-] (n2) -- (n3);
		\draw[-] (n3) -- (n4);
        \draw[-] (n4) -- (n5);
        \draw[-] (n4) -- (n5);
        \draw[-] (n2) -- (n6);
        \draw[-] (n3) -- (n7);
        \draw[-] (n6) -- (n7);
        \draw[-] (n7) -- (n8);
        \draw[-] (n7) -- (n10);
        \draw[-] (n5) -- (n9);
        \draw[-] (n9) -- (n12);
        \draw[-] (n12) -- (n14);
        \draw[-] (n14) -- (n15);
        \draw[-] (n10) -- (n11);
        \draw[-] (n11) -- (n12);
    \end{tikzpicture}\\
    \begin{tikzpicture}
        \node (n1) at (0cm,0cm) {$\vdots$};
    \end{tikzpicture}\\
    \begin{tikzpicture}
        \node (n1) at (0cm,0cm) {$1(n-d)\dots(n-2)(n-1)$};
        \node (n2) at (-2cm,-0.75cm) {$1(n-d)\dots(n-2)n$};
        \node (n3) at (0cm,-1.5cm) {$1(n-d)\dots(n-1)n$};
        \draw[-] (n1) -- (n2);
        \draw[-] (n2) -- (n3);
    \end{tikzpicture}\\
    \begin{tikzpicture}
        \node (n1) at (0cm,0cm) {$1(n-d+1)\dots(n-1)n$};
    \end{tikzpicture}
    \caption{The stacked Hässe diagram of the partial order on $\binom{[n]}{d+1}$, there are lines between the layers, but they are not drawn.}
    \label{fig:stackedhasse}
\end{figure}
\end{proof}

Call the simplicial complex $\Lambda$ above the \defn{lexicographically greedy rigid complex} (\defn{LGRC}), since it is the greedy (with respect to the lexicographic order) choice of $d$-simplices that index a basis of the row space of the complete rigidity matrix $R(p)$, for some generic $p\in(\mathbb{R}^d)^n$.

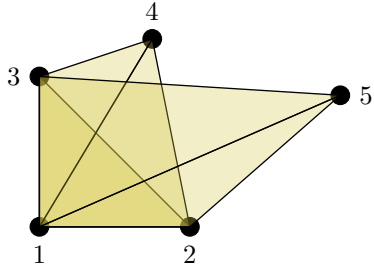
\begin{figure}
    \centering
    \begin{tikzpicture}
        \coordinate[Bullet=black, label=below:1] (n1) at (0,0);
            \coordinate[Bullet=black, label=below:2] (n2) at (2,0);
            \coordinate[Bullet=black, label=left:3] (n3) at (0,2);
            \coordinate[Bullet=black, label=above:4] (n4) at (1.5,2.5);
            \coordinate[Bullet=black, label=right:5] (n5) at (4,1.75);

            \filldraw[fill=yellow!80!black,line width=0.5pt,fill opacity=0.3] (0,0) -- (2,0) -- (0,2) -- (0,0);
            \filldraw[fill=yellow!80!black,line width=0.5pt,fill opacity=0.3] (0,0) -- (2,0) -- (1.5,2.5) -- (0,0);
            \filldraw[fill=yellow!80!black,line width=0.5pt,fill opacity=0.3] (0,0) -- (2,0) -- (4,1.75) -- (0,0);
            \filldraw[fill=yellow!80!black,line width=0.5pt,fill opacity=0.3] (0,0) -- (0,2) -- (1.5,2.5) -- (0,0);
            \filldraw[fill=yellow!80!black,line width=0.5pt,fill opacity=0.3] (0,0) -- (0,2) -- (4,1.75) -- (0,0);
    \end{tikzpicture}
    \caption{The LGRC in dimension 2 on five vertices.}
    \label{fig:LGRC}
\end{figure}

\begin{example}
The bound in Theorem \ref{thm:bound} is not a sufficient condition for volume rigidity.
Indeed the 2-dimensional simplicial complex $\Sigma$ defined by its maximal simplices
\begin{equation*}
    \Sigma_2=\{123,124,125,128,134,135,138,237,267,256,345\}
\end{equation*}
has $f$-vector $f(\Sigma)=(1,8,18,11)$, which is equal to that of the lexicographically greedy complex on eight vertices.
However $\Sigma$ is flexible in $\mathbb{R}^2$.
\end{example}

\begin{lemma}
The $d$-dimensional LGRC on $n$ vertices is generically globally rigid in $\mathbb{R}^d$, for all $d\geq 1$ and $n\geq d+1$.
\end{lemma}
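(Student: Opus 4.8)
The plan is an induction on $n\ge d+1$, realising the $d$-dimensional LGRC on $n$ vertices as the result of attaching a single new vertex to the LGRC on $n-1$ vertices — the volume-rigidity analogue of the $0$-extension argument for global rigidity in Euclidean bar-joint rigidity. Write $\Lambda^{(m)}$ for the $d$-dimensional LGRC on $m$ vertices. For the base case $n=d+1$, the chain count in the proof of Theorem~\ref{thm:bound} gives $\Lambda^{(d+1)}_d=\{123\dots(d+1)\}$, so $\Lambda^{(d+1)}=K_{d+1}^d$, which is generically globally rigid. For the inductive step the same count shows that $\Lambda^{(n)}_d\setminus\Lambda^{(n-1)}_d$ consists precisely of the $d$ maximal simplices $\{1,n\}\cup I$ with $I\in\binom{\{2,\dots,d+1\}}{d-1}$; that is, $\Lambda^{(n)}$ is obtained from $\Lambda^{(n-1)}$ by adjoining the new vertex $n$ together with $d$ new $d$-simplices, each of which meets $[n-1]$ in one of the $d$ facets of the full simplex on $\{1,\dots,d+1\}$ that contain the vertex $1$.

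Next I would fix a generic $q\in(\mathbb{R}^d)^n$, suppose $q'\in(\mathbb{R}^d)^n$ has $\alpha_{\Lambda^{(n)}}(q')=\alpha_{\Lambda^{(n)}}(q)$, and aim to deduce $\alpha_n^d(q')=\alpha_n^d(q)$. Restricting to the $d$-simplices of $\Lambda^{(n-1)}$, none of which involves $n$, gives $\alpha_{\Lambda^{(n-1)}}(q'|_{[n-1]})=\alpha_{\Lambda^{(n-1)}}(q|_{[n-1]})$. As $q|_{[n-1]}$ is again generic and $n-1\ge d+1$, the inductive hypothesis makes $(\Lambda^{(n-1)},q|_{[n-1]})$ globally rigid, so $q'|_{[n-1]}$ and $q|_{[n-1]}$ are congruent. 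Since $q|_{[n-1]}$ is affinely spanning, congruence means there is an affine map $A$ of $\mathbb{R}^d$ whose linear part has determinant $1$ and with $q'|_{[n-1]}=A\circ q|_{[n-1]}$. Replacing $q'$ by $A^{-1}\circ q'$ — which changes neither the hypothesis nor the conclusion, as every entry of $\alpha_n^d$ is invariant under post-composition with such an $A$ — I may assume $q'|_{[n-1]}=q|_{[n-1]}$.

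It then remains to pin down $q'(n)$. For each new simplex $\{1,n\}\cup I$, the equality $\alpha_{\Lambda^{(n)}}(q')_{1In}=\alpha_{\Lambda^{(n)}}(q)_{1In}$ together with $q'|_{[n-1]}=q|_{[n-1]}$ says precisely that $q'(n)$ lies on the affine hyperplane through $q(n)$ with direction $\operatorname{span}\{q(i)-q(1):i\in I\}$. Writing $I=\{2,\dots,d+1\}\setminus\{j\}$ for $j\in\{2,\dots,d+1\}$, these $d$ hyperplane directions are exactly the $d$ coordinate hyperplanes of $\mathbb{R}^d$ with respect to the basis $\{q(2)-q(1),\dots,q(d+1)-q(1)\}$ — a basis because $q(1),\dots,q(d+1)$ are affinely independent for generic $q$ — so their common intersection is $\{0\}$. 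Hence the $d$ hyperplanes meet in the single point $q(n)$, forcing $q'(n)=q(n)$, so $q'=q$ and $(\Lambda^{(n)},q)$ is globally rigid; as $q$ was an arbitrary generic configuration, $\Lambda^{(n)}$ is generically globally rigid.

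The step I expect to need the most care is the reduction to $q'|_{[n-1]}=q|_{[n-1]}$: it rests on the fact that two affinely spanning configurations in $(\mathbb{R}^d)^m$ with equal complete volume vectors differ by an affine transformation whose linear part has determinant $1$. This is standard — a configuration is determined up to special-affine transformation by the signed volumes of a fixed affine basis together with the signed volumes encoding the positions of the remaining vertices relative to it — and it (equivalently, the generic global rigidity of $K_m^d$ recorded earlier) is exactly what lets the two $(n-1)$-vertex sub-frameworks be aligned before the last vertex is located. Everything after that reduction is the elementary linear algebra above, so I anticipate no further obstacle.
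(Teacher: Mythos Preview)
Your proof is correct and follows essentially the same approach as the paper's: align the first $d+1$ vertices by a special-affine transformation (via the single simplex $12\cdots(d+1)$), and then use the $d$ linear volume constraints coming from the simplices $\{1\}\cup I\cup\{j\}$, $I\in\binom{\{2,\dots,d+1\}}{d-1}$, to determine each remaining vertex uniquely. The only difference is presentational: you package the argument as an induction on $n$ and phrase the linear step geometrically as an intersection of $d$ affine hyperplanes, whereas the paper performs the alignment once and then solves for all $q(j)$, $d+2\le j\le n$, in one pass.
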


\begin{proof}
Let $p,q\in(\mathbb{R}^d)^n$ be generic configurations.
Suppose that $(\Lambda,p)$ and $(\Lambda,q)$ are equivalent.
Then
\begin{equation}\label{eq:lexcomppin}
    \begin{vmatrix}
        1&\dots&1\\p(1)&\dots&p(d+1)
    \end{vmatrix}=\begin{vmatrix}
        1&\dots&1\\q(1)&\dots&q(d+1)
    \end{vmatrix},
\end{equation}
and, for all $i_1\dots i_d\in\binom{[d+1]}{d}$ and $d+2\leq j\leq n$,
\begin{equation}\label{eq:lexcomp2}
    \begin{vmatrix}
        1&\dots&1&1\\p(i_1)&\dots&p(i_d)&p(j)
    \end{vmatrix}=\begin{vmatrix}
        1&\dots&1&1\\q(i_1)&\dots&q(i_d)&q(j)
    \end{vmatrix}.
\end{equation}
Equation \ref{eq:lexcomppin} uniquely defines the positions of $q(1),\dots, q(d+1)$ as the image of $p(1),\dots, p(d+1)$ (respectively) under a $d$-volume preserving affine transformation of $\mathbb{R}^d$.
Then, fixing $q(1),\dots, q(d+1)$, for each $d+2\leq j\leq n$, the equations of form \ref{eq:lexcomp2} describe $d+1$ linear equations solved by $q(j)$.
By genericity, $q(j)$ is the unique solution to these equations.
Hence, up to a $d$-dimensional affine transformation of $\mathbb{R}^d$, $(\Lambda,q)$ is the unique equivalent framework to $(\Lambda,p)$.
Moreover, by fixing $q(1)=p(1),\dots, q(d+1)=p(d+1)$, we get $q(j)=p(j)$, for all $d+2\leq j\leq n$, so $(\Lambda,q)$ is congruent to $(\Lambda,p)$.
\end{proof}

The LGRC may be interpreted as a $d$-volume rigidity analogue to a trilateration graph.

Such a family of globally rigid bases of rigidity matroids does not exist in the case of Euclidean graph rigidity by Hendrickson's conditions (specifically redundant rigidity being a necessary condition for global rigidity).
The LGRC would fail to be globally rigid if we were to instead measure the absolute or squared $d$-volumes, as reflections of the vertices in the hyperplanes \textit{opposite} them would be allowed.

So in the case of signed $d$-volume rigidity, a complex is rigid if and only if its exterior algebraically shifted complex is rigid, and if a complex is rigid, its exterior algebraically shifted complex is globally rigid.
In the case of unsigned $d$-volume rigidity (in either of the formulations mentioned above), a complex is rigid if and only if its exterior algebraically shifted complex is rigid, however we can not make any statements on the global rigidity of the shifted complex as of yet.

\section{Vertex Removal Lemmas}\label{sec:vrl}

Here we outline an instance where Theorem \ref{thm:bound} is used to give an improved statement of a theorem, that being the volume rigidity analogue of Tanigawa's Vertex Removal Lemma for Euclidean graph rigidity.

Let $G=(V,E)$ be a graph (ie. a 1-dimensional simplicial complex), $p\in(\mathbb{R}^d)^n$ a configuration and $(G,p)$ a framework.
Two frameworks $(G,p)$ and $(G,q)$ are \defn{Euclidean equivalent} if
\begin{equation*}
    \|p(i)-p(j)\|=\|q(i)-q(j)\|,
\end{equation*}
for all $ij\in E$, and \defn{Euclidean congruent} if
\begin{equation*}
    \|p(i)-p(j)\|=\|q(i)-q(j)\|,
\end{equation*}
for all $ij\in\binom{V}{2}$.
The definitions of \defn{(local)} and \defn{global Euclidean rigidity} in $\mathbb{R}^d$ are analogous to those of local and global $d$-volume rigidity.
Unlike in the case of $d$-volume global rigidity, the global Euclidean rigidity in $\mathbb{R}^d$, for any $d$, is a generic property.

\begin{theorem}\cite{tanigawa2015sufficient})
Let $G=(V,E)$ be a graph and let $i\in V$ be a vertex of degree $\deg(i)\geq d+1$. Suppose that
\begin{enumerate}
    \item $G-i$ is rigid in $\mathbb{R}^d$;
    \item $G-i+K(N(i))$ is globally rigid in $\mathbb{R}^d$;
\end{enumerate}
then $G$ is globally rigid in $\mathbb{R}^d$.
\end{theorem}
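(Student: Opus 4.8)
The plan is to reduce to a generic configuration and then invoke the stress-matrix characterisation of generic global rigidity in $\mathbb{R}^d$ (Gortler--Healy--Thurston). Since global Euclidean rigidity in $\mathbb{R}^d$ is a generic property, it suffices to fix a generic $p\in(\mathbb{R}^d)^V$, set $n:=|V|$, and show $(G,p)$ is globally rigid; note $\deg(i)\le n-1$ forces $n\ge d+2$. The case $n=d+2$ is immediate: then $G-i$ is rigid on $d+1$ vertices, hence equals $K_{d+1}$, and as $\deg(i)\ge d+1$ the vertex $i$ is joined to all others, so $G=K_{d+2}$, which is globally rigid. So assume $n\ge d+3$, whence $H:=G-i+K(N(i))$ has at least $d+2$ vertices. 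By the Gortler--Healy--Thurston theorem, a generic framework on $m\ge d+2$ vertices is globally rigid in $\mathbb{R}^d$ if and only if it carries an equilibrium stress whose $m\times m$ stress matrix has rank $m-d-1$; applying this to $(H,p|_{V\setminus\{i\}})$ (globally rigid, since $p$ is generic and $H$ is generically globally rigid) produces an equilibrium stress $\omega^H$ of rank $(n-1)-d-1=n-d-2$. The goal is now to transfer $\omega^H$ to an equilibrium stress $\omega$ of $(G,p)$ with $\rank\Omega=n-d-1$.

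Next I would set up the stress family on $G$ explicitly, which is where both hypotheses enter. Because $\deg(i)=|N(i)|\ge d+1$ and the vectors $\{p(i)-p(v):v\in N(i)\}$ span $\mathbb{R}^d$ for generic $p$, the equilibrium-at-$i$ equations $\sum_{v\in N(i)}\omega_{iv}(p(i)-p(v))=0$ have solution space of dimension $\deg(i)-d\ge 1$; pick a nonzero solution $\bar\omega$. It imposes on $(G-i,p|_{V\setminus\{i\}})$ the external load $v\mapsto\bar\omega_{iv}(p(i)-p(v))$ (for $v\in N(i)$, zero elsewhere), and a short computation --- using exactly that $\bar\omega$ lies in that kernel --- shows this load has zero resultant and zero moment, i.e.\ is orthogonal to every infinitesimal isometry. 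Since $G-i$ is rigid, its infinitesimal flexes are spanned by the infinitesimal isometries, so the load lies in the row space of the rigidity matrix of $G-i$ and is resolved by a stress $\mu$ on $E(G-i)$. Then $\omega:=\bar\omega\cup\mu$ is an equilibrium stress of $(G,p)$; a dimension count shows every equilibrium stress of $(G,p)$ arises this way, and routing $\bar\omega$ back through the clique $K(N(i))$ instead of through the cone at $i$ gives a parallel bijection with the equilibrium stresses of $(H,p|_{V\setminus\{i\}})$, matching $\omega$ with $\omega^H$.

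The main obstacle is verifying $\rank\Omega=n-d-1$ for the stress $\omega$ obtained from the maximal-rank $\omega^H$. The bound $\rank\Omega\le n-d-1$ is automatic, as $\Omega$ annihilates the $(d+1)$-dimensional space spanned by $\mathbf 1$ and the $d$ coordinate vectors of $p$. For the reverse bound one must analyse $\ker\Omega$ directly: the transfer from $H$ to $G$ alters the diagonal of the stress matrix (the clique becomes a cone), so $\ker\Omega$ is genuinely not ``$\ker\Omega^H$ plus one dimension'', and the argument instead runs: given $z\in\ker\Omega$, the equilibrium-at-$i$ row $\sum_{v\in N(i)}\bar\omega_{iv}(z_i-z_v)=0$ determines $z_i$ from $z|_{N(i)}$, and the rigidity of $G-i$ --- via the maximality of $\rank\omega^H$ on $V\setminus\{i\}$ --- forces $z|_{V\setminus\{i\}}$, hence $z$, into the span of $\mathbf 1$ and the coordinate vectors of $p$. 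Making this reconciliation of the two kernels precise is the real content; once it is done, Gortler--Healy--Thurston gives that $(G,p)$ is globally rigid, and by genericity so is $G$. (One could instead attempt a stress-free route --- restrict an equivalent $(G,q)$ to $V\setminus\{i\}$, use rigidity of $G-i$ to land $q|_{V\setminus\{i\}}$ in one of finitely many congruence classes, eliminate the false classes using global rigidity of $H$ together with the overdetermined trilateration at the degree-$\ge d+1$ vertex $i$, and then recover $q(i)$ --- but making the elimination immune to accidental algebraic coincidences requires a genericity argument of comparable delicacy, so I would favour the stress-matrix approach.)
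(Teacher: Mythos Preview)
The paper does not prove this statement: it is quoted from \cite{tanigawa2015sufficient} solely to motivate the volume-rigidity analogue, Theorem~\ref{thm:VRL}, so there is no proof here to compare your proposal against. For what it is worth, the paper's proof of that analogue is entirely elementary and does not transfer to the Euclidean setting: it uses the linear boundary identity for signed volumes---the volume of $j_1\dots j_{d+1}$ equals an alternating sum of the volumes of $ij_1\dots\widehat{j_l}\dots j_{d+1}$ whenever every $(d-1)$-face of $j_1\dots j_{d+1}$ lies in $\link(i)$---to show that adjoining $K(\link(i))$ imposes no new constraint on equivalent frameworks. No such linear identity holds for squared Euclidean distances, which is why a stress-matrix route via Gortler--Healy--Thurston is the natural tool for the Euclidean theorem.

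That said, the gap you yourself flag is real and is not closed by what you have written. Your $\omega$ is built from an \emph{arbitrary} nonzero $\bar\omega$ in the equilibrium-at-$i$ kernel followed by a load resolution through $G-i$; nothing ties this $\omega$ to the given maximal-rank $\omega^H$. The asserted bijection between stress spaces of $(G,p)$ and $(H,p|_{V\setminus\{i\}})$ is not one in general---when $|N(i)|\ge d+2$ the clique $K(N(i))$ carries self-stresses, so resolving a load through the clique is not unique---and even a correct correspondence between the two stress spaces would not by itself yield $\rank\Omega=n-d-1$. The kernel comparison you sketch in your final paragraph is precisely where the substance of the argument lies, and it remains to be carried out.
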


In the above statement, $N(i)=\{j\in V:ij\in E\}$ denotes the neighbourhood of $i$ and, for any finite set $U$, $K(U):=\left(U,\binom{U}{2}\right)$ is the complete graph on vertex set $U$.

We now state the weaker analogous theorem for $d$-volume rigidity.

\begin{theorem}\label{thm:VRL}
Let $\Sigma$ be a $d$-dimensional simplicial complex and let $i\in\Sigma_0$ be a vertex so that $|\link(i)|\geq d+1$.
Suppose that $\Sigma-i+K(\link(i))$ is generically globally $d$-volume rigid in $\mathbb{R}^d$.
Then $\Sigma$ is generically globally $d$-volume rigid in $\mathbb{R}^d$.
\end{theorem}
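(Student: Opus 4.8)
The natural approach is to adapt Tanigawa's gluing argument to signed $d$-volumes. Fix a generic configuration $p\in(\mathbb{R}^d)^n$ of $\Sigma$ and a configuration $q$ with $(\Sigma,q)$ $d$-volume equivalent to $(\Sigma,p)$; we must show the two frameworks are $d$-volume congruent. Write $U=\Sigma_0\setminus\{i\}$, let $W$ be the set of vertices appearing in the faces of $\link(i)$, and put $\Sigma_*=\Sigma-i+K(\link(i))$, a $d$-dimensional complex on $U$ for which $p|_U$ is again generic in $(\mathbb{R}^d)^{n-1}$. The plan has three movements: transfer the equivalence of $(\Sigma,p)$ and $(\Sigma,q)$ to an equivalence of $(\Sigma_*,p|_U)$ and $(\Sigma_*,q|_U)$; invoke generic global rigidity of $\Sigma_*$ to deduce that $q|_U$ is a volume-preserving affine image of $p|_U$; and then use the maximal simplices of $\Star_\Sigma(i)$ together with $|\link(i)|\geq d+1$ to recover $q(i)$, obtaining $(\Sigma,q)\cong(\Sigma,p)$. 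Theorem~\ref{thm:bound} enters as the bookkeeping that lets the single hypothesis on $\Sigma_*$ here play the combined role of the two hypotheses in the Euclidean statement: rigidity of $\Sigma_*$ forces $f(\Sigma_*)_d\geq d(n-1)-(d^2+d-1)$, and the $\geq d+1$ extra maximal simplices through $i$ bring $\Sigma$ up to the sharp count.

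For the first movement, the $d$-simplices of $\Sigma-i$ are $d$-simplices of $\Sigma$, so their $d$-volumes are preserved automatically; what must be shown is that the signed $d$-volume of every $(d+1)$-subset of $W$ is already determined by $\alpha_\Sigma(p)$. This is where $|\link(i)|\geq d+1$ is first used: each maximal simplex $\{i\}\cup F$ with $F\in\link(i)$ records the $(d+1)\times(d+1)$ volume determinant of $\{i\}\cup F$, which is affine-linear in $p(i)$, and one wants these $\geq d+1$ records, read against the volumes internal to $\Sigma-i$, to pin down the missing volumes among $W$.

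Granting this, generic global rigidity of $\Sigma_*$ gives $\alpha^d_{n-1}(q|_U)=\alpha^d_{n-1}(p|_U)$, so every $(d+1)$-tuple of vertices of $U$ encloses the same signed volume under $q$ as under $p$. Since $p|_U$ affinely spans $\mathbb{R}^d$, comparing the volumes of a fixed affine frame $p(u_0),\dots,p(u_d)$ in $U$ against the remaining vertices identifies $q|_U$ as the image of $p|_U$ under the unique affine map sending $p(u_k)$ to $q(u_k)$, whose linear part has determinant $1$ because $V(p(u_0),\dots,p(u_d))$ is preserved; that is, $q|_U=T\circ p|_U$ for a volume-preserving affine $T$. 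The framework $T^{-1}\circ q$ is congruent to $(\Sigma,q)$ and still equivalent to $(\Sigma,p)$, so we may assume $q$ and $p$ agree on all of $U$. It then remains to show $q(i)=p(i)$. For each $F\in\link(i)$, equality of the $d$-volumes of $\{i\}\cup F$ under $p$ and under $q$ says precisely that $q(i)-p(i)$ is orthogonal to the normal of the affine hull of $\{p(j):j\in F\}$; as $F$ ranges over the $\geq d+1$ faces of $\link(i)$ and $p$ is generic, these normals span $\mathbb{R}^d$, forcing $q(i)=p(i)$. Hence $q=p$ and $(\Sigma,q)$ is congruent to $(\Sigma,p)$; since $p$ was an arbitrary generic configuration, $\Sigma$ is generically globally $d$-volume rigid.

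The main obstacle, used twice above, is the geometric claim that at a generic configuration the affine hulls of the $\geq d+1$ faces of $\link(i)$ have normals spanning $\mathbb{R}^d$ — equivalently, that translated through $p(i)$ they meet only in $p(i)$ — and, in the first movement, that together with the data of $\Sigma-i$ they determine the volumes internal to $W$. In contrast with the Euclidean setting, where $d+1$ spheres about generic points meet in at most one point, this is not a formal consequence of the bare inequality $|\link(i)|\geq d+1$: it can degenerate when every face of $\link(i)$ contains a common $(d-2)$-simplex, so the step requires a genericity argument sensitive to the combinatorial type of $\link(i)$. Isolating the precise statement that is available here — and verifying it is guaranteed by generic global rigidity of $\Sigma-i+K(\link(i))$ — is the crux of the proof.
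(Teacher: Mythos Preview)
Your three-movement plan has the same shape as the paper's argument, but you have left open as a ``crux'' exactly the step the paper dispatches with a one-line determinantal identity, and you have invoked Theorem~\ref{thm:bound} where the paper's proof does not use it at all.

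The idea you are missing for the first movement is the boundary relation for signed $d$-volumes: for any $d+2$ points $p(i),p(j_1),\dots,p(j_{d+1})\in\mathbb{R}^d$, the $d+2$ signed $d$-volumes of the facets of the $(d+1)$-simplex on these points satisfy a single linear relation with $\pm1$ coefficients (the boundary of a $(d+1)$-simplex is a $d$-cycle and signed $d$-volume is a cocycle). In particular the volume of $j_1\cdots j_{d+1}$ is an explicit signed sum of the volumes of $ij_1\cdots\widehat{j_l}\cdots j_{d+1}$ over $1\le l\le d+1$. Hence whenever every $(d-1)$-face of $j_1\cdots j_{d+1}$ lies in $\link(i)$, each term on the right is already a $d$-simplex of $\Sigma$, and the new volume is determined by $\alpha_\Sigma$ --- no genericity argument is needed, and nothing about the combinatorial type of $\link(i)$ has to be isolated. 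The paper simply iterates this identity to pass from $\Sigma$ to $\Sigma+K(\link(i))$, then appeals to the hypothesis on $\Sigma-i+K(\link(i))$ together with the fact that $i$ lies in at least $d+1$ maximal simplices to conclude; your face-number bookkeeping via Theorem~\ref{thm:bound} never enters. Your concern about degenerate $\link(i)$ in the third movement is perceptive, but it is orthogonal to the gap in your write-up: the paper asserts that attachment step in one line, and what your proposal is genuinely missing relative to the paper is the identity above.
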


Here, for any collection of $k$-simplices, with $0\leq k\leq d$, $U$, $K(U)$ is the smallest complete $d$-dimensional simplicial complex with $K(U)_k\subseteq U$.

\begin{proof}
Suppose that $i<j$, for all $j\in\link(i)_0$.
Let $p\in(\mathbb{R}^d)^n$ be a generic configuration.
Define a $d$-simplex $j_1\dots j_{d+1}$, every $(d-1)$-simplex of which is contained in $\link(i)$.
Then, suppose that $q\in(\mathbb{R}^d)^n$ is a generic configuration and that $(\Sigma+j_1\dots j_{d+1},p)$ and $(\Sigma+j_1\dots j_{d+1},q)$ are equivalent, ie.
$(\Sigma,p)$ and $(\Sigma,q)$ are equivalent and
\begin{equation*}
\begin{split}
    &\begin{vmatrix}
        1&\dots&1\\p(j_1)&\dots&p(j_{d+1})
    \end{vmatrix}-\begin{vmatrix}
        1&\dots&1\\q(j_1)&\dots&q(j_{d+1})
    \end{vmatrix}\\
    =&\sum\limits_{l=1}^{d+1}\begin{vmatrix}
        1&\dots&1&1&1&\dots&1\\p(j_1)&\dots&p(j_{l-1})&p(i)&p(j_{l+1})&\dots&p(j_{d+1})
    \end{vmatrix}\\
    &-\sum\limits_{l=1}^{d+1}\begin{vmatrix}
        1&\dots&1&1&1&\dots&1\\p(j_1)&\dots&p(j_{l-1})&p(i)&p(j_{l+1})&\dots&p(j_{d+1})
    \end{vmatrix}=0.
\end{split}
\end{equation*}
The first equality coming from the fact that the $d+2$ vertices $i,j_1,\dots,j_{d+1}$ make up a complete $d$-dimensional simplicial complex as a sub-complex of $\Sigma+j_1\dots j_{d+1}$, and therefore, any one of its $d$-simplices is completely determined by the signed sum of the others.
The second equality comes from the fact that each $d$-simplex $ij_1\dots j_{l-1}j_{l+1}\dots j_{d+1}$ is, by definition, in $\Sigma$, and so its volumes over $p$ and $q$ are equal.
Hence $\Sigma$ is generically globally rigid if and only if $\Sigma+j_1\dots j_{d+1}$ is generically globally rigid.

Repeating this step progressively for each $j_1\dots j_{d+1}\in K(\link(i))_d$, we get that $\Sigma$ is generically globally rigid if and only if $\Sigma+K(\link(i))$ is generically globally rigid.

Finally, since $i$ is contained in (at least) $d+1$ $d$-simplices in $\Sigma$, if $\Sigma-i+K(\link(i))$ is generically globally rigid, then $\Sigma+K(\link(i))$ is generically globally rigid.
\end{proof}

Note that there is nothing particularly special about the choosing to attach the complete $d$-dimensional simplicial complex to $\Sigma$, in fact we may attach any generically globally rigid complex on at least $d+1$ vertices to obtain the same result.
Therefore, we consider our lexicographically greedy complex from earlier, which has $dn-(d^2+d-1)$ $d$-simplices.

\begin{corollary}\label{cor:VRLCor}
Let $\Sigma$ be a $d$-dimensional simplicial complex and let $i\in\Sigma_0$ be a vertex. Let $\Lambda$ be a copy of the $d$-dimensional LGRC with vertex set the neighbours of $i$ in $\Sigma$.
If $\Lambda_{d-1}\subseteq\link(i)$ and $\Sigma-i+\Lambda$ is generically globally rigid, then $\Sigma$ is generically globally rigid.
\end{corollary}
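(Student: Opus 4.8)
The plan is to re-run the proof of Theorem~\ref{thm:VRL} with $\Lambda$ playing the role of $K(\link(i))$; the only property of $K(\link(i))$ used there beyond combinatorics is that it is generically globally rigid, and the Lemma stating that the $d$-dimensional LGRC is generically globally rigid supplies this for $\Lambda$. So I would argue in three steps: (i) record that $\Lambda$ is generically globally $d$-volume rigid and has at least $d+1$ faces of dimension $d-1$; (ii) show $\Sigma$ is generically globally rigid if and only if $\Sigma+\Lambda$ is; (iii) deduce generic global rigidity of $\Sigma+\Lambda$ from that of $\Sigma-i+\Lambda$.

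Step (i) is immediate: relabelling the vertices of the LGRC is a simplicial isomorphism that sends generic configurations to generic configurations, so the Lemma gives that $\Lambda$ is generically globally rigid (this tacitly uses that $i$ has at least $d+1$ neighbours, which is part of ``$\Lambda$ is a copy of the LGRC''). Also $\Lambda$ contains the standard $d$-simplex $123\dots d(d+1)$, hence $|\Lambda_{d-1}|\geq d+1$; since $\Lambda_{d-1}\subseteq\link(i)$, vertex $i$ lies in at least $d+1$ $d$-simplices of $\Sigma$, so the degree hypothesis of Theorem~\ref{thm:VRL} is met. (Theorem~\ref{thm:bound} with $k=d-1$ even gives the exact count $|\Lambda_{d-1}|=(d+1)+(|\Lambda_0|-d-1)\binom{d+1}{2}$.)

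For step (ii) the only content is the non-trivial implication: if $(\Sigma,p)$ and $(\Sigma,q)$ are $d$-volume equivalent, then so are $(\Sigma+\Lambda,p)$ and $(\Sigma+\Lambda,q)$. It suffices to fix a $d$-simplex $j_1\dots j_{d+1}\in\Lambda_d$ and check that its signed volume agrees over $p$ and $q$. Since $\Lambda$ is a complex and $\Lambda_{d-1}\subseteq\link(i)$, each $(d-1)$-face $j_1\dots\widehat{j_l}\dots j_{d+1}$ lies in $\link(i)$, so $ij_1\dots\widehat{j_l}\dots j_{d+1}\in\Sigma_d$ for every $l$; thus the $d+2$ vertices $i,j_1,\dots,j_{d+1}$ span a copy of $K_{d+2}^d$, exactly $d+1$ of whose $d+2$ top-dimensional simplices already lie in $\Sigma$. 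The identity $\delta^d\circ\delta^{d-1}=0$ on these $d+2$ vertices — precisely the identity displayed in the proof of Theorem~\ref{thm:VRL} — expresses the signed volume of $j_1\dots j_{d+1}$ as an alternating sum of the signed volumes of those $d+1$ $d$-simplices of $\Sigma$, each unchanged between $p$ and $q$. Since congruence is equality of $\alpha_n^d$ and so is independent of the complex, a framework of $\Sigma+\Lambda$ is globally rigid exactly when the framework $(\Sigma,p)$ on the same configuration is, and taking $p$ generic the generic property transfers both ways.

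Step (iii) is the vertex-reattachment step, taken over verbatim from the last paragraph of the proof of Theorem~\ref{thm:VRL} with $\Lambda$ in place of $K(\link(i))$: writing $\Sigma+\Lambda=(\Sigma-i+\Lambda)\cup\Star_{\Sigma}(i)$, every neighbour of $i$ already appears in $\Sigma-i+\Lambda$ (indeed in $\Lambda$), and $i$ is reattached through the $\geq d+1$ $d$-simplices of $\Star_{\Sigma}(i)$, which at a generic configuration force the position of $i$ uniquely once an equivalent framework of $\Sigma-i+\Lambda$ has been fixed up to congruence; hence generic global rigidity of $\Sigma-i+\Lambda$ passes to $\Sigma+\Lambda$, and then to $\Sigma$ by step (ii). I expect step (iii) to be the only genuine obstacle, since making ``the $\geq d+1$ volume equations pin the position of $i$'' rigorous requires that the corresponding $(d-1)$-face normals span $\mathbb{R}^d$ at a generic $p$ — true because $i$ has at least $d+1$ distinct neighbours, but exactly the point that the proof of Theorem~\ref{thm:VRL} already leaves implicit; steps (i) and (ii) are bookkeeping together with one use of $\delta^d\circ\delta^{d-1}=0$.
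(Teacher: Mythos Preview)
Your proposal is correct and follows the same approach as the paper: the paper's proof is the one-liner ``proceed as in the proof of Theorem~\ref{thm:VRL}, except only adding $d$-simplices that are contained in $\Lambda_d$,'' and your steps (ii) and (iii) are precisely an unpacking of that sentence. One small over-claim: the generic global rigidity of $\Lambda$ itself is not actually used anywhere in the argument (nor is the generic global rigidity of $K(\link(i))$ used in the proof of Theorem~\ref{thm:VRL}); what is used is only the \emph{hypothesis} that $\Sigma-i+\Lambda$ is generically globally rigid, together with the combinatorial fact $\Lambda_{d-1}\subseteq\link(i)$ so that each added $d$-simplex has all its $(d-1)$-faces in $\link(i)$, and that $|\Lambda_{d-1}|\geq d+1$ to re-attach $i$ --- so your step~(i) can be trimmed to the face-count observation.
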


\begin{proof}
Proceed as in the proof of Theorem \ref{thm:VRL}, except only adding $d$-simplices that are contained in $\Lambda_d$.
\end{proof}

Therefore, the link complex of the vertex we remove need not be as dense as a complete graph, increasing the number of instances where we may apply the Vertex Removal Lemma.

Indeed it was this question that motivated the use of exterior algebraic shifting to find the minimal face numbers required for algebraic shifting.
For a $2$-dimensional simplicial complex $\Sigma$, with $f(\Sigma)_0=n$ and $f(\Sigma)_2=2n-5$, a lower bound of $f(\Sigma)_1\leq 3n-7$ may be achieved reasonably easily using elementary methods.
However, improving the lower bound was beyond the scope of our initial line of attack, and the elementary methods would quickly get out of hand when trying to generalise to higher dimensional complexes.

\bibliographystyle{plainnat}
\bibliography{MinFNos}

\end{document}